\newlength{\fixboxwidth}
\def\inv{^{-1}}
\def\rd{\mathbb{R} ^d}
\newtheorem{theorem}{Theorem}[section]
\newtheorem{lemma}[theorem]{Lemma}
\newtheorem{proposition}[theorem]{Proposition}
\newtheorem{corollary}[theorem]{Corollary}
\newtheorem{definition}[theorem]{Definition}
\newtheorem{remark}[theorem]{Remark}
\newtheorem{example}[theorem]{Example}
\begin{document}
\begin{abstract}
We study the notion of molecules in coorbit spaces. 
The main result states that if  an operator, originally defined on an
appropriate  space of test functions, 
maps atoms to molecules, then it can be extended to a bounded operator
on coorbit spaces. 
For time-frequency molecules we recover some boundedness results on
modulation spaces, 
for time-scale molecules we obtain the boundedness on homogenous Besov spaces.
\end{abstract}

\title{Molecules in Coorbit Spaces and Boundedness of Operators} 
\author{Karlheinz Gr\"ochenig}
\address{Faculty of Mathematics \\
University of Vienna \\
Nordbergstrasse 15 \\
A-1090 Vienna, Austria}
\email{karlheinz.groechenig@univie.ac.at}
\author{Mariusz Piotrowski}
\email{mariusz.piotrowski@univie.ac.at}

\subjclass[2000]{42B35, 46E35}
\date{}
\keywords{}
\thanks{Both authors were supported by the
Marie-Curie Excellence Grant MEXT-CT 2004-517154.}
\maketitle


\section{Introduction}\label{section_1}
A remarkable principle of classical analysis states that an operator
that maps atoms to molecules is bounded. Here an ``atom'' is a function on
$\rd $ satisfying certain support and moment conditions, and  norm
bounds. Atoms arose first in the study of atomic decomposition of real
Hardy spaces~\cite{Coi74} and singular integral operators on Hardy
spaces (see~\cite{grafakos,stein93}). The notion of an atom was later
diversified to  adapt  to
 the Besov-Triebel-Lizorkin spaces~\cite{FJ90}, and then
generalized to ``molecules'', which are functions satisfying norm
bounds, moment and decay conditions (instead of support conditions),
see~\cite{CRTMW80,FJ90}.   
The resulting  molecular decompositions of function spaces  have been
successfully applied to  study  the boundedness properties of 
Calder\`on--Zygmund operators on Besov and Triebel--Lizorkin spaces,
see \cite{FrJa85,FJ90,FrJaWe91_Book,FHJW,Tor91} for some of the main contributions. 
The  technical part  of the proofs consists of showing that the
operator under consideration maps smooth atoms into smooth
molecules. Using norm estimates for atomic and molecular
decompositions, one then obtains the boundedness of the operator.  
A similar  strategy has been used in \cite{GrTo99} to study a class of
 pseudodifferential operators on Besov and Triebel--Lizorkin spaces.

In this paper we study atoms, molecules, and the  boundedness of operators
in the context of coorbit theory. In coorbit theory one can attach to
every irreducible, unitary, integrable representation $\pi $ of a
locally compact group $\mathcal{G}$ on a Hilbert space $\mathcal{H}$ a class
of $\pi $-invariant Banach spaces $\mathrm{Co}Y$ that is parametrized
by  function spaces $Y$ on the group
$\mathcal{G}$. These so-called coorbit spaces possess a rich theory
ranging from interpolation properties and duality theory to atomic
decompositions and the existence of frames, see the series of papers
\cite{FeGr88,FeGr89_1, FeGr89_2,Gr91}. The best known examples of
coorbit spaces are the Besov-Triebel-Lizorkin spaces (by choosing  the group
of affine transformations on $\rd $ and the representation by
translations and dilations) and the modulations spaces (by choosing
the Heisenberg group and the Schr\"odinger representation). An
interesting recent example is  the family of  shearlet spaces of~\cite{DJST08}. 

In the context of general coorbit spaces, the atoms are 
subsets $\{\pi (x_i)g: x_i \in \mathcal{G}\}$ in the orbit of the
representation $\pi $ for suitable $g\in \mathcal{H}$. One of the main
results of coorbit theory 
establishes the existence of atomic decompositions with respect to
such atoms~\cite{FeGr89_1,Gr91}.  
In the standard examples, these atomic decompositions imply the 
non-orthogonal wavelet expansions of  the  homogenous Besov spaces and
the      Gabor-type expansions of  the modulation spaces.

Our contribution here is the introduction of molecules in  general
  coorbit spaces and the study of their properties. Roughly
speaking, a set of molecules is determined  by an envelope function
$H$ on  the group $\mathcal{G}$ and a discrete subset of positions $\{x_i\}
$ in $ \mathcal{G}$. See Section~3 for the precise definition. 
Our main result then shows that any operator that maps a set of  atoms $\pi
(x_i)g$ to a set of  molecules is bounded on the associated  coorbit
spaces (Theorem~\ref{main_theorem}).

We then investigate what the abstract theorem says for the concrete
examples of the Schr\"odinger representation of the Heisenberg group
and for the group of affine transformations. For the Heisenberg group
we recover the notion of time-frequency molecules which were
introduced already  in ~\cite{bacahela06-1,Gr04,GrRz07}. Our main
theorem implies  
the boundedness  of pseudodifferential operators on  
modulation spaces~\cite{Gr06,GrRz07}. The use   of
time-frequency molecules  sheds a new light on
mapping properties of pseudodifferential operators.
 For the group of affine transformations we 
investigate explicit  time-scale molecules. Our main insight shows that   classical
smooth molecules are also  time-scale
molecules in the sense of coorbit theory.  As an effortless application of our  main result, we verify 
the boundedness of the Hilbert transform on homogenous Besov spaces.

The paper is organized as follows. In  Section~2  we summarize some of
the standard facts of coorbit space theory  from
\cite{FeGr89_1,Gr91}. 
We   recall the necessary 
definitions of function spaces on locally compact groups  and of
coorbit spaces,   and then  describe their atomic decompositions and
Banach frames. 
In Section~3 we introduce the notion of molecules in the context of coorbit spaces and study their fundamental properties.
 This section contains  our  main result about the boundedness of
 operators acting on coorbit spaces: if an operator  maps atoms to
 coorbit molecules, then it  can be
extended to a bounded operator on the  corresponding coorbit spaces.
Section~4 is devoted to make explicit the abstract theory for the
case of the 
Heisenberg group and of  the group of affine transformations.

\section{Coorbit Space Theory}

First we recall the concepts and required results from the theory of
coorbit spaces. We work with functions spaces and representations on a
locally compact group. 

\subsection{Preliminaries and notation}

In the sequel, let $\mathcal{G}$ be a locally compact group with identity $e$.
Integration on $\mathcal{G}$ will always
be with respect to the left Haar measure, and $\Delta$ is the  the
Haar modulus on $\mathcal{G}$. We denote by $L_x F(y) = F(x^{-1}y)$ and
$R_x F(y) = F(yx)$, $x,y \in \mathcal{G}$, the operators of left and
right translation. 
Further, we also need the involution $F^\vee(x) = F(x^{-1})$. 
The space of all bounded functions on $\mathcal{G}$ with compact support will be denoted by $L^{\infty}_0(\mathcal{G})$.
Let $\chi_U$ be the characteristic function of the set $U$.

\subsection{Banach Function Spaces on $\mathcal{G}$}\label{subsection_Banach_function_spaces}

 We work in the context of  Banach function spaces. We assumed that
 $Y$ is a Banach space consisting of functions on
 $\mathcal{G}$ equipped with the norm $\|\cdot|Y\|$ and that $Y$ satisfies
 the following properties. 
\begin{itemize}
  \item [(i)] $Y$ is continuously embedded into $L_{\mathrm{loc}}^1(\mathcal{G})$, the locally integrable functions on $\mathcal{G}$.
  \item [(ii)] $Y$ is {\em solid}, i.e., 
if $F \in Y$, $G$ is measurable and satisfies $|G(x)| \leq |F(x)|$ a.e., then
$G \in Y$ and $\|G|Y\| \leq \|F|Y\|$.
  \item [(iii)]$Y$ is invariant under left and right translations, i.e
    $L_xY\subseteq  Y$ and $R_xY\subseteq  Y$ for all $x\in \mathcal{G}$.
 If we denote $u(x)=|\!|\!| L_x|Y|\!|\!|$ and $v(x)=\Delta(x^{-1})|\!|\!|R_{x^{-1}}|Y|\!|\!|$,  the operator norms of translations on $Y$,
 then we require that
\[L^1_u * Y \subseteq Y \quad\text{and}\quad   Y*L^1_v \subseteq Y.\]
\end{itemize}
 We only  work with pairs $(Y,w)$, where the  weight function $w$  on $\mathcal{G}$  satisfies
\begin{align}\label{weight_w}
w(x)&\geq C \max \{u(x), u(x^{-1}),v(x), \Delta(x^{-1})v(x^{-1})\},\\
w(x)& =w(x^{-1}) \Delta(x^{-1})\nonumber
\end{align}
for some  constant $C>0$. 
In particular, $w(x)\geq 1$, $\|f|L^1_w\|=\|f^\vee |L^1_w\|$ and  $Y*L^1_w \subset  Y$.

We emphasize that the assumptions in coorbit theory concern mostly the
weight $w$ associated to $Y$, the main results hold simultaneously for
the entire class of function spaces $Y$  with the same weight $w$, and
not just for an single $Y$.

The Lebesgue spaces $L^p(\mathcal{G})$, $1 \leq p\leq \infty$, and the
mixed-norm spaces $L^{p,q}(\mathcal{G}$ provide some  natural
examples of solid Banach spaces on $\mathcal{G}$.
If $w$ is some positive measurable weight function on $\mathcal{G}$, then we  define
$L^p_w$  to be the set of all measurable function $F$ such that  $ Fw \in L^p$ with
$\|F|L^p_w\| := \|Fw|L^p\|$. A continuous weight $w$ is called submultiplicative
if $w(xy) \leq w(x) w(y)$ for all $x,y \in \mathcal{G}$. A weight
function $m$ is called  $w$-moderate
if $m(xyz) \leq C w(x) m(y) w(z)$, $x,y,z \in \mathcal{G}$.
It follows that   $L^p_m$ is  invariant under  left and right
translations,  if and only if $m$ is $w$-moderate.

As a next ingredient, we need certain discrete sets in $\mathcal{G}$.
Let $X=(x_i)_{i\in I}$ be some discrete set of points in $\mathcal{G}$ and
$U$ a relatively compact neighborhood of $e$ in $\mathcal{G}$.
\begin{itemize}\itemsep=-1pt
\item[(a)] $X$ is called $U$-{\em dense} if $\mathcal{G} = \bigcup_{i \in I} x_i U$.
\item[(b)] $X$ is called {\em relatively
separated} if for all compact sets
$K \subset \mathcal{G}$ there exists a constant $C_K$ such that
$\sup_{j \in I} \#\{ i\in I,\, x_iK \cap x_jK \neq \emptyset \} \leq C_K$.
\item[(c)] $X$ is called  well-spread if it is
both relatively separated and $U$-dense for
some $U$.
\end{itemize}

\begin{definition}\upshape
Given a well-spread family $X=(x_i)_{i\in I}$, and a 
relatively compact neighborhood $U$ of $e \in \mathcal{G}$, we define
the sequence space $Y_d$ associated to a solid Banach function space
$Y$ to be 
\begin{align}\label{def_Ydiscrete}
Y_d \,:=\, Y_d(X) \,:=\, Y_d(X,U) \,:=&\, \{ (c_i)_{i \in I}:\quad \sum_{i\in I}
c_i\chi_{x_i U} \in Y\},
\end{align}
endowed with the norm
$\|(c_i)_{i \in I} | Y_d\| := \|\sum_{i \in I} c_i \chi_{x_i U}|Y\|.$
\end{definition}
For instance, if $Y= L^p_w(\mathcal{G})$, then
$Y_d=L^p_w(\mathcal{G})_d = \ell^p_{\widetilde{w}}$,
where $\widetilde{w}$ is determined by  $\widetilde{w}_i=w(x_i)$.

If $L^\infty _0(\mathcal{G}$ is dense in $Y$, then the finite
sequences are dense in $Y_d$~\cite[Lemma 3.5(a)]{FeGr89_1}. 

\subsection{Wiener Amalgam Spaces}
Let $U$ be  some relatively compact neighborhood  of $e \in \mathcal{G}$.
We define the local maximum function of $F$  by
\begin{equation}\label{def_control}
F_\sharp(x) \,: =\sup_{y\in xU} |F(y)|, \quad x\in \mathcal{G},
\end{equation}
whenever  $F$ is locally bounded, in symbols $F\in L^{\infty}_{\mathrm{loc}}$.
Given a Banach space $Y$ of functions
on $\mathcal{G}$ satisfying \ref{subsection_Banach_function_spaces} (i)--(iii),
the  Wiener amalgam space $W(L^{\infty},Y)$ is  defined by
\[\label{def_Wiener_space}
W(L^{\infty},Y) \,:=\, \{F \in L^{\infty}_{\mathrm{loc}}:\quad F_\sharp \in Y\}
\]
equipped with the norm
\begin{equation}\label{qnormW}
\|F|W(L^{\infty},Y)\|\,:=\, \|F_\sharp|Y\|.
\end{equation}
Similarly, the right local maximum function is 
$F_\sharp ^R (x) =\sup_{y\in U\inv x\inv } |F(y)|$ and the right Wiener amalgam
space $W^R(L^{\infty},Y)$ is defined by the norm
$\|F|W^R(L^{\infty},Y)\|\,:=\, \|F_\sharp ^R|Y\|.$ 
By $W^R(C,Y)$ we denote the closed subspace of $W^R(L^\infty,Y)$ consisting of
continuous functions.
In several arguments we  need the following  convolution relation from~\cite[Proposition 5.2]{FeGr89_1}.
\begin{proposition}\label{corrolary_convolution_relation} If
  $(c_i)_{i\in I}\in Y_d$ and $H\in W^R(L^\infty, L^1_{w})$, then
  $\sum_{i\in I} c_i L_{x_i}H \in Y$ and 
\begin{align}\label{convolution_relation}
\Big\| \sum_{i\in I} c_i L_{x_i}H \;\Big| Y \Big\|\leq \;C\|(c_i)_{i\in I}\;|Y_d\| \|H\; |\; W^R(L^\infty, L^1_{w})\|.
\end{align}
The sum $\sum_{i\in I} c_i L_{x_i}H$  converges unconditionally in $Y$, 
  if $L^\infty _0$ is dense in $Y$, and otherwise $w^*$ in the
  $\sigma (Y, L^1_w)$-topology.  
\end{proposition}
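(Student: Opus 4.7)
The plan is to pointwise dominate the series by a convolution $F*K$ with $F\in Y$ and $K\in L^1_w$, and then invoke the inclusion $Y*L^1_w\subseteq Y$ that follows from~(iii) together with the weight bounds in~\eqref{weight_w}. First I set $F(y):=\sum_{i\in I}|c_i|\chi_{x_iU}(y)$, so that $\|F|Y\|=\|(c_i)|Y_d\|$ by definition of the sequence space. Since any two relatively compact neighbourhoods of $e$ yield equivalent amalgam norms, I may assume without loss of generality that $U$ is symmetric, $U=U^{-1}$.

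The key pointwise estimate is that for every $i\in I$ and every $z\in x_iU$,
\[
|H(x_i^{-1}y)|\leq H^R_\sharp(y^{-1}z).
\]
Indeed, writing $z=x_iu$ with $u\in U$, one has $U^{-1}(y^{-1}z)^{-1}=U^{-1}u^{-1}x_i^{-1}y$, and this set contains the point $x_i^{-1}y$ (take the factor $u^{-1}\in U=U^{-1}$), so the supremum defining $H^R_\sharp(y^{-1}z)$ dominates $|H(x_i^{-1}y)|$. Averaging this bound over $z\in x_iU$ against Haar measure, summing in $i$, and recognising the resulting integral through the identity $F*G^\vee(y)=\int F(z)G(y^{-1}z)\,dz$, I obtain
\[
\sum_{i\in I}|c_i|\,|H(x_i^{-1}y)|\leq \frac{1}{|U|}\bigl(F*(H^R_\sharp)^\vee\bigr)(y).
\]
The weight symmetry $w(x)=w(x^{-1})\Delta(x^{-1})$ forces $\|G^\vee|L^1_w\|=\|G|L^1_w\|$, so $(H^R_\sharp)^\vee\in L^1_w$ with norm equal to $\|H|W^R(L^\infty,L^1_w)\|$; the inclusion $Y*L^1_w\subseteq Y$ together with solidity of $Y$ then yields the claimed norm estimate.

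For the convergence assertion I would separate two cases. When $L^\infty_0$ is dense in $Y$, finite sequences are dense in $Y_d$ by the lemma cited after the definition of $Y_d$; applying the norm estimate above to the tail sequence $(c_i)_{i\notin J}$ for finite $J\subset I$ shows that the partial sums are Cauchy, and the same bound applied to arbitrary subseries upgrades this to unconditional convergence. In the general case I would pass to the $\sigma(Y,L^1_w)$-topology: the uniform norm bound keeps the partial sums in a bounded subset of $Y$, while testing against any fixed $g\in L^1_w$ reduces the convergence to a scalar question that is settled by dominated convergence applied to the pointwise estimate above.

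The main obstacle is the asymmetry between the left translations $L_{x_i}$ appearing in the sum and the \emph{right} Wiener amalgam in which $H$ is assumed to lie. It is precisely the symmetric choice of $U$ together with the involution-invariance of $L^1_w$ guaranteed by the symmetry condition on $w$ that reconciles the two sides; once that book-keeping is in place, the proof reduces to a standard application of the convolution hypothesis on $Y$.
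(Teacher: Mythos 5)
The paper does not actually prove Proposition~\ref{corrolary_convolution_relation}: it is imported verbatim from \cite[Proposition 5.2]{FeGr89_1}, so there is no in-paper argument to compare yours against. On its own merits, your proof of the norm estimate is correct and is essentially the standard one: the pointwise inequality $|H(x_i^{-1}y)|\leq H^R_\sharp(y^{-1}z)$ for $z\in x_iU$ is valid with the paper's definition $H^R_\sharp(x)=\sup_{y\in U^{-1}x^{-1}}|H(y)|$, the averaging over $x_iU$ correctly produces the majorant $\frac{1}{|U|}\,F*(H^R_\sharp)^\vee$ with $F=\sum_i|c_i|\chi_{x_iU}$, the involution invariance $\|G^\vee|L^1_w\|=\|G|L^1_w\|$ is exactly what the symmetry condition on $w$ in \eqref{weight_w} provides, and solidity plus $Y*L^1_w\subseteq Y$ finish the estimate. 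One bookkeeping remark: your ``WLOG $U=U^{-1}$'' tacitly identifies the neighbourhood in $Y_d(X,U)$ with the one in the amalgam norm; what the pointwise inequality really requires is that the amalgam neighbourhood contain $U_{Y_d}^{-1}$, which you can arrange by enlarging the amalgam neighbourhood alone (equivalent amalgam norms), so no symmetry and no change to $Y_d$ is needed. The unconditional convergence in the dense case (tails of $(c_i)$ tend to zero in $Y_d$ by solidity of $Y_d$ plus density of finite sequences) is fine.

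The one genuine soft spot is the $w^*$ clause. To run dominated convergence against a fixed $g\in L^1_w$ you need the majorant $F*(H^R_\sharp)^\vee$ to be integrable against $|g|$, i.e.\ some form of the pairing between $Y$ and $L^1_w$. This does not follow from axioms (i)--(iii) and solidity alone: an element of $Y$ need not be locally bounded, and for mixed-norm spaces of $L^{1,\infty}$ type the pointwise product of a $Y$-function with an $L^1_w$-function need not be integrable. The standard way to close this is to observe that the majorant does not merely lie in $Y$ but in the amalgam $W(L^\infty,Y)$ (via the convolution relation $Y*W^R(L^\infty,L^1_w)\subseteq W(L^\infty,Y)$), and that $W(L^\infty,Y)$ embeds into $L^\infty_{1/w}$ under the standing hypotheses; only then does $\int |F*(H^R_\sharp)^\vee|\,|g|<\infty$ hold for all $g\in L^1_w$ and your dominated-convergence argument goes through. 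As written, the last step of your convergence discussion asserts more than you have justified.
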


\subsection{Coorbit Spaces}
Let $\pi$ be an irreducible unitary representation of $\mathcal{G}$
on a Hilbert space $\mathcal{H}$. For a fixed $g \in \mathcal{H}$, the abstract wavelet transform is defined as
\[
V_g f(x) \,:=\, \langle f, \pi(x) g\rangle, \quad f \in \mathcal{H}, \quad x\in\mathcal{G}.
\]
The representation $\pi$ is called square-integrable,  if there is  a
non-zero vector 
$g \in \mathcal{H}$, a so-called admissible vector, such that $V_g g \in L^2(\mathcal{G})$.
The main ingredient in coorbit space theory is  a reproducing formula of the form
\begin{equation}\label{rep_form_H}
V_g f \,=\, V_g f * V_g g, \qquad \text{ for all } f\in \mathcal{H}\, ,
\end{equation}
where $*$ denotes the  convolution on $\mathcal{G}$. Reproducing formulae are known to hold for many types of representations.
In particular, \eqref{rep_form_H} holds for every square-integrable,
irreducible representation $\pi$ of $\mathcal{G}$~\cite{grossmann-morlet85}
and also for many reducible square-integrable representations, see \cite{Fuhr_Book, GKT92}.
In order to introduce the coorbit spaces we first need to extend
the definition of the abstract wavelet  transform to a suitable space of distributions.
We define the following class
of analyzing vectors
$$
\mathbb{A}_w \,:=\, \{g \in \mathcal{H}:\quad  V_g g \in L^1_w\}.
$$
Let us assume that $\mathbb{A}_w$ is non-trivial, i.e., $\pi$ is
integrable, then 
 $\pi$ is also square-integrable. For a fixed $g \in \mathbb{A}_w \setminus \{0\}$
we define
\[
\mathcal{H}_w^1 \,:=\, \{f \in \mathcal{H}:\quad V_g f \in L^1_w\}
\]
endowed with the norm $\|f|\mathcal{H}_w^1\|:= \|V_g f|L^1_w\|$.
Further, we denote by $(\mathcal{H}_w^1)^\urcorner$ the anti-dual, i.e.,
the space of all bounded conjugate-linear functionals on $\mathcal{H}^1_w$.
An equivalent norm
on $(\mathcal{H}^1_w)^\urcorner$ is given by $\|V_g f|L^\infty_{1/w}\|$.
Since the inner product on $\mathcal{H}\times \mathcal{H}$ extends to a sesquilinear form on
$(\mathcal{H}^1_w)^\urcorner\times \mathcal{H}^1_w$, the extended representation coefficients
\[
V_g f(x) \,=\, \langle f,\pi(x) g\rangle, \quad f \in (\mathcal{H}^1_w)^\urcorner,\quad  g\in \mathbb{A}_w
\]
are well-defined.
We are now in  a position to define  coorbit spaces.
\begin{definition}\label{def_coorbit}\upshape
 Let $Y$ be a solid Banach space of functions on $\mathcal{G}$ with canonical weight $w$.
Then for $g\in \mathbb{A}_w, g \neq 0,$ the coorbit space is defined by
\[
\mathrm{Co} Y \,:=\, \{f \in (\mathcal{H}^1_w)^\urcorner:\quad V_g f \in Y \}
\]
with the norm $\|f|\mathrm{Co}Y\| := \|V_g f|\;Y\|$.
\end{definition}
\begin{remark}\upshape
$\mathcal{H}_w^1$, $(\mathcal{H}^1_w)^\urcorner$,  and $\mathrm{Co} Y$ are  $\pi$-invariant Banach spaces.
If $\pi$ is irreducible, then their
definitions do not depend on the choice of the analyzing vector $g$ in
the sense that different windows provide equivalent
norms~\cite[Thm.~4.2]{FeGr89_1}. 
\end{remark}

\subsection{Atomic Decomposition and Banach Frames}
Next  we describe atomic decompositions and Banach frames in
coorbit spaces as outlined in \cite{FeGr89_1,FeGr89_2,Gr91}.
The treatment of  coherent frames for $\mathrm{Co}Y$ requires a
further  restriction of  the set of analyzing vectors.
The set of "better vectors" is given by
$$
\mathbb{B}_w \,:=\, \{g \in \mathcal{H}:\quad  V_g g \in W^R(L^{\infty},L^1_w)\}.
$$
If $\mathcal{A}_w \neq \emptyset$, then also $\mathcal{B}_w \neq \emptyset$.


Below we summarize the  results about the existence of atomic
decompositions and frames from \cite[Theorem U]{Gr91}.

\begin{theorem}\label{atomic_decomposition}
Let $Y$ satisfy \ref{subsection_Banach_function_spaces} (i)-(iii) with canonical weight $w$ given by \eqref{weight_w}
and  assume that $g\in \mathbb{B}_w, g \neq 0$.  Then there exists a
neighborhood $U$ of $e$  such that
for any $U$-dense and relatively separated family $X=(x_i)_{i\in I}$ in $\mathcal{G}$ the set $\{\pi(x_i)g\}_{i\in I}$
provides an atomic decomposition and a Banach frame for $\mathrm{Co} Y$.
\begin{description}
\item[\bf A. (Atomic decomposition)] Every $f\in \mathrm{Co}Y$
  possesses an expansion 
  \begin{equation}
    \label{eq:fi1}
      f= \sum_{i\in I} c_i(f)\pi(x_i)g,
  \end{equation}
where the sequence of coefficients $(c_i(f))_{i\in I}$ depends linearly on $f$ and satisfies
\[\|(c_i(f))_{i\in I}|Y_d\|\leq C \|f|\mathrm{Co}Y\|\]
with a constant $C$ depending only of $g$.\\
Conversely, if $(c_i)_{i\in I}\in Y_d$, then $f= \sum_{i\in I} c_i\pi(x_i)g$ is in $\mathrm{Co}Y$ and
 \[ \|f|\mathrm{Co}Y\| \leq C'\|(c_i)_{i\in I}|Y_d\|.\]
The series defining $f$  converges unconditionally in the  normof
$\mathrm{Co}Y$, if $L_0^{\infty}(\mathcal{G})$ is dense in $Y$,
otherwise it converges unconditionally  in the weak--* topology of
           $(\mathcal{H}^1_w)^\urcorner$.

\item[\bf B. (Banach frames)] $\{\pi(x_i)g\}_{i\in I}$ is a Banach
  frame for $\mathrm{Co}Y$. This means that 
\begin{itemize}
\item[(i)] There are two constants $C_1,C_2>0$ depending only on $g$ such that
\[C_1 \|f|\mathrm{Co}Y\| \leq \|(\langle f,\pi(x_i)g \rangle)_{i\in
  I}|Y_d\| \leq C_2 \|f|\mathrm{Co}Y\|.\] 
\item[(ii)] (Reconstruction operator) There exists a bounded mapping $R$ from $Y_d(X)$ onto
  $\mathrm{Co} Y$, such that $ f= R(\langle f,\pi(x_i)g \rangle _{i\in
    I})$.  

\end{itemize}
\item[\bf C. (Dual frames)] There exists a "dual frame" $\{e_i\}_{i\in
    I}$ in $\mathcal{H}^1_w$, such that,  for every  $f\in
  \mathrm{Co}Y$, 
           \[f= \sum_{i\in I}\langle f,e_i \rangle \pi(x_i)g \, ,\]
and $\|(\langle f,e_i\rangle)_{i\in I}|Y_d\| $ is an equivalent norm
on $\mathrm{Co} Y$. 
\end{description}
\end{theorem}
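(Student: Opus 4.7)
The plan is to follow the classical Feichtinger–Gr\"ochenig discretization scheme, starting from the reproducing formula $V_g f = V_g f * V_g g$ for $f\in\mathrm{Co}Y$. Fix $g\in\mathbb{B}_w$ so that $V_g g\in W^R(L^\infty,L^1_w)$, and fix a relatively compact neighborhood $U$ of $e$ (to be shrunk later). Given a $U$-dense, relatively separated $X=(x_i)_{i\in I}$, choose a bounded uniform partition of unity $(\psi_i)_{i\in I}$ with $\mathrm{supp}\,\psi_i\subset x_i U$ and $\sum_i\psi_i\equiv 1$. Define the averaging coefficients $c_i(f)=\int_{\mathcal{G}} V_g f(y)\,\psi_i(y)\,dy$, and set
\[
Tf \,:=\, \sum_{i\in I} c_i(f)\,\pi(x_i)g.
\]
By Proposition~\ref{corrolary_convolution_relation} and the definition of $Y_d$, the synthesis map $(d_i)\mapsto \sum_i d_i\pi(x_i)g$ is bounded from $Y_d$ into $\mathrm{Co}Y$, so $T:\mathrm{Co}Y\to\mathrm{Co}Y$ is bounded.

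The central step is to compare $Tf$ with $f$ via the reproducing formula. Writing
\[
V_g f(x)-V_g(Tf)(x) \,=\, \sum_{i\in I}\int \psi_i(y)\bigl(V_g f(y)\,V_g g(y^{-1}x) - V_g f(y)\,V_g g(x_i^{-1}x)\bigr)dy,
\]
the integrand is supported in $y\in x_iU$, so one can bound the right-hand side pointwise by $|V_g f|*\mathrm{osc}_U(V_g g)$, where
\[
\mathrm{osc}_U(H)(z)=\sup_{u\in U}|H(z)-H(u^{-1}z)|.
\]
The hypothesis $V_g g\in W^R(L^\infty,L^1_w)$ together with continuity of left translation in this amalgam space gives $\|\mathrm{osc}_U(V_g g)\,|\,W^R(L^\infty,L^1_w)\|\to 0$ as $U$ shrinks to $\{e\}$. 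Invoking Proposition~\ref{corrolary_convolution_relation} once more, one concludes that for $U$ small enough (independently of $X$ within its relatively separated class), $\|I-T\|_{\mathrm{Co}Y\to\mathrm{Co}Y}<1$. A Neumann series argument then produces an exact expansion $f=\sum_i c_i(T^{-1}f)\pi(x_i)g$ with $\|(c_i(T^{-1}f))|Y_d\|\le C\|f|\mathrm{Co}Y\|$, yielding Part A; the converse direction is precisely the synthesis bound from Proposition~\ref{corrolary_convolution_relation}. Unconditional convergence in $\mathrm{Co}Y$ when $L^\infty_0$ is dense in $Y$, and $w^*$-unconditional convergence otherwise, is inherited from the corresponding statement in Proposition~\ref{corrolary_convolution_relation}.

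For Part B, I would replace the averaging coefficients by the sampled coefficients $V_g f(x_i)=\langle f,\pi(x_i)g\rangle$ and run the same scheme. The upper frame bound $\|(\langle f,\pi(x_i)g\rangle)|Y_d\|\le C_2\|f|\mathrm{Co}Y\|$ follows because $V_g f\in W(L^\infty,Y)$ (a consequence of $V_g g\in W^R(L^\infty,L^1_w)$ and the convolution relation), so the pointwise values $V_g f(x_i)$ are controlled by the local maxima that define the amalgam norm. Setting $\widetilde Tf=\sum_i V_g f(x_i)\psi_i * \pi(\cdot)g$-style reconstruction (or, equivalently, replacing $c_i(f)$ by $|U|V_g f(x_i)$ up to a normalization), the same oscillation estimate shows $\|I-\widetilde T\|<1$ for small $U$. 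The reconstruction operator is then $R=\widetilde T^{-1}\circ S$, where $S$ sends the sampled sequence to the corresponding atomic sum. The lower frame bound is immediate from the boundedness of $R$.

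Part C is obtained by dualizing the Neumann series inversion: setting $e_i := T^{-*}\psi_i$-type elements — concretely, apply $(T^{-1})^*$ to the functionals $f\mapsto c_i(f)$ to get continuous linear functionals on $\mathrm{Co}Y$, which are represented (via the sesquilinear pairing with $\mathcal{H}^1_w$) by explicit elements $e_i\in\mathcal{H}^1_w$. Membership of $e_i$ in $\mathcal{H}^1_w$ requires verifying that $T$ and $T^{-1}$ preserve $\mathrm{Co}L^1_w=\mathcal{H}^1_w$, which in turn reduces to applying Proposition~\ref{corrolary_convolution_relation} with $Y=L^1_w$. The hard part throughout is the uniform oscillation bound: one must show that the required smallness of $\|I-T\|$ can be achieved for a $U$ depending only on $g$ and $w$, not on the particular well-spread family $X$. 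This reduces to continuity of left translation in $W^R(L^\infty,L^1_w)$, which is where the choice $g\in\mathbb{B}_w$ (rather than merely $g\in\mathbb{A}_w$) is essential.
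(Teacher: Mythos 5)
Your sketch is correct and reproduces the standard Feichtinger--Gr\"ochenig discretization argument (reproducing formula, bounded uniform partition of unity, oscillation estimate $\mathrm{osc}_U(V_gg)\to 0$ in $W^R(L^\infty,L^1_w)$, Neumann series inversion, and sampling in $W(L^\infty,Y)$ for the frame bounds). The paper itself gives no proof of this theorem --- it is quoted verbatim from \cite{FeGr89_1,FeGr89_2,Gr91} (Theorem U of \cite{Gr91}) --- and your argument is essentially the one carried out in those references.
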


\section{Molecules in Coorbit Space Theory}
In this section we introduce the notion of molecules in coorbit spaces
and state and prove our  main result. 
\begin{definition} \label{definition_molecules}\upshape
Assume that  $g \in \mathbb{B}_w, g\neq 0,$ and let  $X=(x_i)_{i\in I}$ be a well-spread family in $\mathcal{G}$.
A collection of functions $\{m_i\}_{i\in I}\subset \mathcal{H}$ is called a {\em set of molecules},
if there exists an envelope function $H\in  W^R(L^\infty, L^1_w)$ such that
\begin{align}\label{formula_definition_molecules}
|V_g m_i(z)|\leq L_{x_i}H(z), \quad i\in I.
\end{align}
\end{definition}

\begin{remark}\upshape
We may think of $\mathcal{G}$ as a kind of phase space and the function $V_g f$  (for fixed $g\neq 0$) as a phase-space representation
of $f$. The molecule $m_i$ is then localized at $x_i\in \mathcal{G}$ and a set of molecules has a uniform envelope in phase-space.
In other words, each molecule possesses the same phase-space concentration.
\end{remark}

\begin{example}\upshape
1. Every set of atoms $\{\pi(x_i)g\}_{i\in I}$ for $g\in
    \mathcal{B}_w$  is a set of molecules in the sense of
    Definition~\ref{definition_molecules}, because $|\langle \pi (x_i)
    g, \pi (z) g\rangle | = |\langle     g, \pi (x_i ^{-1}z) g\rangle
    | = L_{x_i} |V_gg(z)|$ and $V_gg \in  W^R(L^\infty, L^1_w)$.

2. Fix $g_0 \in \mathbb{B}_w$ and a positive  function $H\in  W^R(L^\infty, L^1_w)$, and set
\[C_\mathcal{H}:=\{g \in \mathcal{H}:\quad |V_{g_0} g(x)| \leq H(x) \}.\]
If $X=(x_i)_{i\in I}$ is well-spread and $g_i\in C_\mathcal{H}$,
then 
\[|V_{g_0} (\pi (x_i)g_i)(z)|= |\langle \pi (x_i)g_i, \pi (z)
g_0\rangle| =  |L_{x_i}V_{g_0}g(z) |\leq L_{x_i}H(z) \]
and so the set
$\{m_i=\pi(x_i)g_i\}_{i\in I}$ forms a family of $H$-molecules.
\end{example}

In preparation for the main result, we verify  the following  basic
properties of molecules.
\begin{lemma}\label{synthesis_lemma}

  (i) The definition of molecules does not depend on the particular
  choice of the window $g\in \mathbb{B}_w$. 

(ii) \textbf{Synthesis.} Let $\{m_i\}_{i\in I}$ be a set of
    molecules subordinated to $H\in W^R (L^\infty, L^1_{w})$. 
 The synthesis operator $(c_i)_{i\in I} \to \sum _{i\in I} c_i m_i$ is
 bounded from $Y_d$ to $  \mathrm{Co}Y$. If  $(c_i)_{i\in I}\in Y_d$,
 then $f=\sum_{i\in I} c_i m_i \in  \mathrm{Co} Y$  and 
   \begin{align}\label{synthesis_lemma_formula_1}
    \Big\| \sum_{i\in I} c_i m_i\;|\mathrm{Co}Y \Big\|\leq
    C\|(c_i)_{i\in I}\;|Y_d\| \|H|\;W^R(L^\infty, L^1_{w})\|\, , 
   \end{align}
for some constant $C$. The sum defining $f$ converges unconditionally,
whenever $L^\infty _0(\mathcal{G})$ is dense in $Y$, and in the
$w^*$-sense on $(\mathcal{H}^1_w)^\urcorner $ otherwise. 

  (iii) \textbf{Analysis.} If, in addition,  $H\in  W(L^\infty, L^1_w)$, then the coefficient operator
   $Cf:=(\langle f,m_i\rangle)_{i\in I}$ is bounded from  $\mathrm{Co}Y$ to $Y_d$ with
   \[\|(\langle f,m_i\rangle)_{i\in I}\;|Y_d\| \leq  C \|f\;| \mathrm{Co}Y\|. \]
\end{lemma}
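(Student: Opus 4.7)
The three parts admit a common structure: a pointwise envelope estimate on the relevant $V_g$-transform followed by a transfer to coorbit or sequence-space norms, either via Proposition~\ref{corrolary_convolution_relation} (for synthesis) or via a sampling estimate in Wiener amalgam spaces (for analysis). Parts (i) and (ii) are fairly direct; part (iii) is the main obstacle.

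For part (i), given $g_1, g_2 \in \mathbb{B}_w$, I would invoke the change-of-window identity $V_{g_2} m_i = (V_{g_1} m_i) * V_{g_2} g_1$ (up to a normalizing scalar, coming from the reproducing formula \eqref{rep_form_H}). Combining with the hypothesis $|V_{g_1} m_i| \le L_{x_i} H$ and the commutation $(L_{x_i} H) * K = L_{x_i}(H * K)$ yields $|V_{g_2} m_i| \le L_{x_i}(H * |V_{g_2} g_1|)$. Since $W^R(L^\infty, L^1_w)$ is a convolution algebra that contains both $H$ and $V_{g_2} g_1$ (the latter by $g_1, g_2 \in \mathbb{B}_w$), the new envelope $H * |V_{g_2} g_1|$ again lies there.

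For part (ii), I would set $f = \sum_i c_i m_i$ (initially as a formal sum) and apply $V_g$ term-by-term to obtain
\[
|V_g f(z)| \,\le\, \sum_i |c_i|\, |V_g m_i(z)| \,\le\, \sum_i |c_i|\, L_{x_i} H(z).
\]
Proposition~\ref{corrolary_convolution_relation} applied to the right-hand side supplies both the bound \eqref{synthesis_lemma_formula_1} and the claimed mode of convergence: unconditional in $Y$ when $L^\infty_0$ is dense, otherwise $\sigma(Y, L^1_w)$-weak-$*$. Convergence of $\sum c_i m_i$ in $\mathrm{Co}Y$ (resp.\ weak-$*$ in $(\mathcal{H}^1_w)^\urcorner$) is then obtained by approximating with finite sequences, which are dense in $Y_d$ in the first case, and by testing against arbitrary $h \in \mathcal{H}^1_w$ and transferring the weak-$*$ convergence from the majorant in the second.

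Part (iii) is where the real work lies. Since the molecule estimate places $m_i \in \mathcal{H}^1_w$, the extended orthogonality relation $\langle f, h\rangle = \langle V_g f, V_g h\rangle$ on $(\mathcal{H}^1_w)^\urcorner \times \mathcal{H}^1_w$ gives
\[
|\langle f, m_i\rangle| \,\le\, \int_\mathcal{G} |V_g f(z)|\, L_{x_i} H(z)\, dz \,=\, (|V_g f| * H^\vee)(x_i).
\]
Relative separatedness of $X$ lets me dominate $\sum_i (|V_g f| * H^\vee)(x_i)\, \chi_{x_i U}$ pointwise by a constant multiple of the left local maximal function of $|V_g f| * H^\vee$, so it suffices to show $|V_g f| * H^\vee \in W(L^\infty, Y)$. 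This is exactly why the hypothesis is strengthened from $W^R$ to $W$ here: the identity $(H^\vee)_\sharp^R = H_\sharp$ together with the symmetry $w(x) = w(x^{-1})\Delta(x^{-1})$ yields $H^\vee \in W^R(L^\infty, L^1_w)$ with equal norm, and the standard amalgam-convolution embedding $Y * W^R(L^\infty, L^1_w) \hookrightarrow W(L^\infty, Y)$ then closes the argument, delivering $\|(\langle f, m_i\rangle)_i | Y_d\| \le C\,\|f|\mathrm{Co}Y\|\,\|H | W(L^\infty, L^1_w)\|$. The real hurdle is setting up this amalgam-convolution step cleanly; everything else is a bookkeeping exercise once it is in place.
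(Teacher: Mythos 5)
Your argument is correct, and part (ii) is essentially identical to the paper's proof (termwise majorization by $\sum_i |c_i| L_{x_i}H$ followed by Proposition~\ref{corrolary_convolution_relation} and a density, resp.\ weak-$*$, argument). For parts (i) and (iii), however, you take a genuinely different, ``continuous'' route where the paper works discretely. In (i) the paper expands the new window $h$ in the frame $\{\pi(z_j)g\}$ for $\mathcal{H}^1_w$ and produces the envelope $\widetilde H=\sum_j c_j R_{z_j}H$, using only the right-translation invariance of $W^R(L^\infty,L^1_w)$ and $(c_j)\in\ell^1_{\widetilde w}$; you instead use the change-of-window identity from the orthogonality relations and obtain the envelope $H*|V_{g_2}g_1|$, which requires irreducibility (available here), the fact that $V_{g_2}g_1\in L^1_w$ for $g_1,g_2\in\mathbb{A}_w$, and the convolution relation $W^R(L^\infty,L^1_w)*L^1_w\subseteq W^R(L^\infty,L^1_w)$ --- all of which are in \cite{FeGr89_1} but should be cited explicitly. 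In (iii) the paper first invokes the atomic decomposition $f=\sum_j c_j\pi(z_j)g$ of Theorem~\ref{atomic_decomposition} and bounds $|\langle f,m_i\rangle|$ by the samples of the discrete sum $\sum_j c_j L_{z_j}H^\vee$; you bound $|\langle f,m_i\rangle|$ directly by the samples of the continuous convolution $|V_gf|*H^\vee$ and then use $Y*W^R(L^\infty,L^1_w)\hookrightarrow W(L^\infty,Y)$ plus the sampling estimate for relatively separated sets. Your identification of why the hypothesis is upgraded to $H\in W(L^\infty,L^1_w)$ --- namely $(H^\vee)^R_\sharp=H_\sharp$, so that $H^\vee\in W^R(L^\infty,L^1_w)$ exactly when $H\in W(L^\infty,L^1_w)$ --- is the same mechanism the paper uses implicitly when it passes to $\|H^\vee|W^R(L^\infty,L^1_w)\|$. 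What your route buys is independence from the atomic decomposition theorem in (iii) and from the existence of a discrete frame for $\mathcal{H}^1_w$ in (i); what it costs is a heavier reliance on the amalgam convolution relations and orthogonality relations of \cite{FeGr89_1}, which the paper's discrete bookkeeping largely sidesteps.
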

\begin{proof}
To prove {\em (i)} we assume that $g,h \in  \mathbb{B}_w$ and that
$\{\pi (z_j)g: j\in J\} $ is a (Banach) frame for $\mathcal{H}^1_w$.
After plugging  the frame expansion of $h=\sum_{j\in J} \langle h,e_j
\rangle \pi(z_j)g$, 
where the sequence $(c_j)_{j\in I}$ with $c_j:=|\langle h,e_j \rangle|$ is in $\ell^1_{\widetilde{w}}$,
 into \eqref{formula_definition_molecules} we obtain that 
\begin{align*}\label{}
|V_h m_i(z)|&=|\langle m_i,\pi(z)h \rangle)|\leq \sum_{j\in I}|\langle h,e_j \rangle||\langle  m_i, \pi(z)\pi(z_j)g\rangle|\\
&\leq  \sum_{j\in I} c_j L_{x_i}H(zz_j)=L_{x_i}\Big(\sum_{j\in I}c_j R_{z_j}H(z)\Big).
\end{align*}
Since $W^R(L^\infty,L^1_w)$ is invariant under right translations, we
find that  $\widetilde{H}= \sum_{j\in I}c_j R_{z_j}H\in
W^R(L^\infty,L^1_w)$, and
\eqref{formula_definition_molecules} is  satisfied for $h$ in place of
$g$ with the envelope function $\widetilde{H}$.

For the proof of {\em (ii)} we assume first  that
$L^\infty_0(\mathcal{G})$ is dense in $Y$. In this case, the finite
sequences are dense in $Y_d$ by ~\cite[Lemma~3.5]{FeGr89_1} and thus  it suffices 
to prove \eqref{synthesis_lemma_formula_1} for finite sequences.
If supp$(c)$  is finite, then  by the solidity of $Y$ and the property of molecules we obtain
\begin{align*}
\Big\| \sum_{i\in I} c_i m_i |\mathrm{Co}Y\Big\|&=\Big\|V_g\left ( \sum_{i\in I} c_i m_i\right) \;\Big|Y \Big\|
\leq \Big\| \sum_{i\in I} |c_i| |V_g m_i|\; \Big|Y \Big\|
\label{norm12} \\
&\leq \Big\| \sum_{i\in I} |c_i| L_{x_i}H \;\Big| Y \Big\|\leq \;C\|(c_i)_{i\in I}\;|Y_d\| \|H\; |\; W^R(L^\infty, L^1_{w})\|.
\end{align*}
The last inequality above follows immediately from
Proposition~\ref{corrolary_convolution_relation}. This norm estimate
also implies the unconditional convergence in $\mathrm{Co} Y$.

If $L^\infty _0(\mathcal{G})$ is not dense in $Y$, then still $\sum
_{i\in I} |c_i| L_{x_i} H \in Y$, but the sum converges only in the
weak-$^*$ sense. Thus the above  estimate  still holds, and 
$\sum_{i\in I} c_i m_i \in \mathrm{Co}Y$ is $w^*$-convergent.

Finally, we show {\em (iii)}. By virtue of Theorem
\ref{atomic_decomposition}, every $f\in \mathrm{Co}Y$ possesses an
expansion 
\[f= \sum_{j\in J}c_j \pi(z_j)g\]
with $(c_j)_{j\in J}\in Y_d$ and $\|(c_j)_{j\in J}|Y_d\| \leq C \|f |
\mathrm{Co} Y\|$. Plugging again the above expansion yields
\begin{align*}\label{}
|\langle f, m_i \rangle)|&\leq \sum_{j\in J}|c_j|\, |\langle \pi(z_j)g, m_i\rangle|\\
&\leq  \sum_{j\in J} c_j H(x_i^{-1}z_j)=\sum_{j\in J}c_j L_{z_j}H^\vee(x_i).
\end{align*}
Consequently, we get
\begin{align*}\label{}
\|(\langle f,m_i\rangle)_{i\in I}\;|Y_d\| &\leq \Big\|\sum_{j\in J}c_j L_{z_j}H^\vee | W(L^{\infty}, Y)\Big\|\\
&\leq \|(c_i)_{i\in I}\;|Y_d\| \|H^{\vee}\; |\; W^R(L^\infty, L^1_{w})\| \leq C \|f\;| \mathrm{Co}Y\|.
\end{align*}
\end{proof}
Now we formulate our  main result on the boundedness of operators on
coorbit spaces. 
\begin{theorem}\label{main_theorem}
Suppose that $g\in \mathbb{B}_w$ and that  $\{\pi(x_i)g\}_{i\in I}$
forms a Banach  frame for $\mathrm{Co} Y$
with canonical dual frame $\{e_i\}_{i\in I}$ (as guaranteed by
Theorem~\ref{atomic_decomposition}).

Assume that the operator $T$ is  bounded from   $\mathcal{H}^1_w$ to
$(\mathcal{H}^1_w)^\urcorner$ and that $T$ 
maps the atoms $\pi(x_i)g, i\in I,$ to the set of molecules  $m_i=
T(\pi(x_i)g)$ with envelope $H\in W^R(L^\infty, L^1_w)$.  
Then $T$ extends to a  bounded operator on
$\mathrm{Co}Y$. Furthermore, the operator norm of  $T$ is bounded by 
$\|H\; |\; W^R(L^\infty, L^1_{w})\|$.
\end{theorem}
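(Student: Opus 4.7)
The plan is to build the extension of $T$ to $\mathrm{Co}Y$ by expanding $f$ through its dual-frame decomposition, mapping atoms to molecules term by term, and then applying the synthesis half of Lemma~\ref{synthesis_lemma} to control the resulting series in $\mathrm{Co}Y$.

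Concretely, for $f\in\mathrm{Co}Y$, Theorem~\ref{atomic_decomposition}(C) supplies the expansion
\[ f \,=\, \sum_{i\in I}\langle f,e_i\rangle\,\pi(x_i)g, \qquad \|(\langle f,e_i\rangle)_{i\in I}\,|\,Y_d\| \,\leq\, C\,\|f\,|\,\mathrm{Co}Y\|. \]
Formally applying $T$ term by term yields
\[ Tf \,=\, \sum_{i\in I}\langle f,e_i\rangle\, T(\pi(x_i)g) \,=\, \sum_{i\in I}\langle f,e_i\rangle\, m_i, \]
and I would take this identity as the \emph{definition} of the extension on $\mathrm{Co}Y$. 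Since $\{m_i\}_{i\in I}$ is a set of molecules with envelope $H\in W^R(L^\infty,L^1_w)$ and the coefficient sequence lies in $Y_d$, Lemma~\ref{synthesis_lemma}(ii) guarantees that the series converges in $\mathrm{Co}Y$ (unconditionally in norm if $L^\infty_0(\mathcal{G})$ is dense in $Y$, in the weak-$*$ sense on $(\mathcal{H}^1_w)^\urcorner$ otherwise) and obeys
\[ \|Tf\,|\,\mathrm{Co}Y\| \,\leq\, C\,\|(\langle f,e_i\rangle)_{i\in I}\,|\,Y_d\|\,\|H\,|\,W^R(L^\infty,L^1_w)\| \,\leq\, C'\,\|H\,|\,W^R(L^\infty,L^1_w)\|\,\|f\,|\,\mathrm{Co}Y\|. \]
Chaining the two inequalities gives both the boundedness of $T$ on $\mathrm{Co}Y$ and the announced estimate on the operator norm.

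The one delicate point, which I regard as the main obstacle, is to verify that this construction truly \emph{extends} the operator originally given on $\mathcal{H}^1_w$ rather than producing a different map. For $f\in\mathcal{H}^1_w$, apply Theorem~\ref{atomic_decomposition} with $Y=L^1_w$: the atomic expansion then converges in the $\mathcal{H}^1_w$-norm, so the boundedness of $T\colon\mathcal{H}^1_w\to(\mathcal{H}^1_w)^\urcorner$ legitimately commutes with the sum and yields $Tf=\sum_i\langle f,e_i\rangle m_i$ in $(\mathcal{H}^1_w)^\urcorner$, matching the definition above. When $L^\infty_0(\mathcal{G})$ is not dense in $Y$, both the synthesis convergence and the one inherited from continuity of $T$ have to be interpreted in the $\sigma((\mathcal{H}^1_w)^\urcorner,\mathcal{H}^1_w)$-topology; pairing either series with an arbitrary test vector $h\in\mathcal{H}^1_w$ produces the same scalar (both reduce to a convergent sum $\sum_i\langle f,e_i\rangle\langle m_i,h\rangle$), so the two constructions coincide and the extension is unique and well defined on all of $\mathrm{Co}Y$.
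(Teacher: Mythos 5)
Your proposal is correct and follows essentially the same route as the paper: define $\widetilde{T}f=\sum_{i}\langle f,e_i\rangle m_i$ via the dual-frame expansion, bound it with the synthesis part of Lemma~\ref{synthesis_lemma}, and use the assumed weak-$*$ continuity of $T\colon\mathcal{H}^1_w\to(\mathcal{H}^1_w)^\urcorner$ together with the norm convergence of the frame expansion in $\mathcal{H}^1_w=\mathrm{Co}\,L^1_w$ to identify $\widetilde{T}$ with $T$. The only (harmless) difference is that you verify the agreement directly for every $f\in\mathcal{H}^1_w$, whereas the paper first checks it on the atoms $\pi(x_i)g$ and then passes to $\ell^1_w$-combinations.
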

\begin{proof}
For  $f= \sum _{i\in I} c_i \pi (x_i)g$, we would like to define $Tf =
\sum _{i\in I} c_i T(\pi (x_i)g) = \sum _{i\in I} c_i
m_i$. Lemma~\ref{synthesis_lemma} then yields the correct norm
estimates. However, in general, the representation of $f$ with respect
to $\{\pi (x_i)g\}$ is not unique, therefore we have to show that the
natural extension procedure is unique.

{\tt Step 1.} First we  define a canonical extension $\widetilde{T}$
of $T$ to $\mathrm{Co}Y$ via the frame expansion of $f$. Let $e_i \in
\mathcal{H}_w^1$ be the dual frame of $\pi (x_i)g, i \in I$, the
existence of which is asserted in
Theorem~\ref{atomic_decomposition}(C). Then $f\in \mathrm{Co}Y $ has
the expansion 
$  f= \sum_{i\in I}\langle f,e_i \rangle \pi(x_i)g $ with coefficient
sequence $(\langle f,e_i\rangle )_{i\in I} \in Y_d$ and 
\begin{align}\label{}
\|(\langle f,e_i \rangle)_{i\in I}\;|Y_d\|\leq C \|f\;| \mathrm{Co}Y\|,
\end{align}
where the constant $C>0$ is independent of $f$.
We define $\widetilde{T}f$ by 
 \begin{align}\label{proof_step_2_2}
 \widetilde{T}f= \sum_{i\in I}\langle  f, e_i\rangle T(\pi(x_i)g)= \sum_{i\in I}\langle  f, e_i\rangle m_i.
 \end{align}
By Lemma~\ref{synthesis_lemma}(ii) we find that $\widetilde{T}f$ is in
$\mathrm{Co}Y$ and that 
\begin{eqnarray}
  \|\widetilde{T}f\;|\mathrm{Co}Y\| &\leq &   C\|(\langle f,e_i
\rangle)_{i\in I}\;|Y_d\| \|H\; |\; W^R(L^\infty, L^1_{w})\| \notag \\
&\leq & \;C'\|H\; |\; W^R(L^\infty, L^1_{w})\| \|f\;|\mathrm{Co}Y \|. 
  \label{eq:ch2}
\end{eqnarray}
Furthermore, the series defining $\widetilde{T}f$ converges
unconditionally in $\mathrm{Co}Y$, if $L^\infty _0$ is dense in $Y$,
and $w^*$ in $(\mathcal{H}^1_w)^\urcorner$ otherwise.

{\tt Step 2:} It remains to be shown that $\widetilde{T}$ coincides
with $T$ on $\mathcal{H}_w^1$. Here 
we  exploit the  assumed continuity of  $T$ from   $\mathcal{H}^1_w$
to $(\mathcal{H}^1_w)^\urcorner$. 
This  means that the convergence $f_n\rightarrow f$ in
$\mathcal{H}^1_w$ implies the {\em w}*-convergence  $Tf_n\rightarrow Tf$. 
In particular, for $\pi (x_i)g$ the net  of partial sums
\[f_{F}=\sum_{k\in F}\langle \pi(x_i)g,e_k \rangle\pi(x_k)g \]
converges to $\pi(x_i)g$ as $F \rightarrow I$, where $(F)$ is the  net
of finite subsets of $I$ ordered by inclusion. 
Consequently,
\begin{eqnarray}
m_i &=& T(\pi (x_i)g) = w^*-\lim Tf_{F} \notag  \\
&=&w^*-\lim _{F\to I} \sum_{k\in F}\langle \pi(x_i)g, e_k\rangle T(\pi
(x_i)g) \notag \\
&=& w^*-\lim _{F\to I} \sum_{k\in F}  \langle \pi(x_i)g, e_k\rangle m_k
\label{eq:ll12} \\
&=& \widetilde{T} ((\pi (x_i)g)) \, . \notag
\end{eqnarray}
Since the  $m _k$'s are molecules, the  series in \eqref{eq:ll12} converges also  in
$\mathcal{H}^1_w$ by Lemma~\ref{synthesis_lemma}. 
The identity $T(\pi (x_i)g ) = \widetilde{T}(\pi (x_i)g )$  implies that
$Tf= \widetilde{T}f$ whenever $f= \sum 
_{i\in I } c_i \pi (x_i) g$ and $(c_i )_{i\in I} \in \ell ^1_w$. 

We now take $\widetilde{T}$ as the desired   extension of $T$ from
$\mathcal{H}_w^1$ to $\mathrm{Co} Y$.  By Step~1 this extension is
bounded on $\mathrm{Co}Y$. 
This completes the proof.
\end{proof}

\begin{remark}\upshape
  We observe that Theorem~\ref{main_theorem} asserts the simultaneous
  boundedness of $T$ on all coorbit spaces $\mathrm{Co} Y$ that
  possess the same associated weight $w$ given in \eqref{weight_w}. 
\end{remark}

\section{Examples and Applications}
\subsection{The Heisenberg Group and Time-Frequency Molecules}  
We now describe the consequences of Theorem \ref{main_theorem}
in the context of time-frequency molecules. Time-frequency molecules were introduced in \cite[Section 5.3]{Gr04}
and independently in \cite{bacahela06-1} and were  studied  in detail in \cite[Section 7]{GrRz07}.

We first discuss how the modulation spaces fit into  coorbit space setting.
We consider  the $d$-dimensional reduced Heisenberg group $\mathcal{G}_{\mathrm{H}} = 
\mathbb{R}^d \times \mathbb{R}^d \times \mathbb{T}$ 
 with multiplication
\[(x,\omega,\tau) (x',\omega',\tau') =
(x+x',\omega + \omega', \tau \tau' e^{\pi i(x'\cdot \omega - x\cdot \omega')}).\]
Let $T_xf(t)=f(t-x)$ and $M_{\omega}f(t)=e^{2\pi it \cdot\omega}f(t)$ be the operators of translation and modulation, respectively, and
$\pi$ be  the Schr\"odinger representation of $\mathcal{G}_{\mathrm{H}}$ acting on $L^2(\mathbb{R}^d)$
by time-frequency shifts
\[
\pi(x,\omega,\tau) \,:=\, \tau e^{\pi i x\cdot \omega} T_x M_\omega
\,=\, \tau e^{-\pi i x \cdot \omega} M_\omega T_x.
\]
This is an irreducible unitary and square-integrable representation of $\mathcal{G}_{\mathrm{H}}$.
Except  for  a  trivial phase factor the representation coefficient $V_g f(x,\omega,\tau) = \langle f, \pi(x,\omega,\tau) g\rangle_{L^2(\mathbb{R}^d)}$
coincides with the  Short-Time Fourier Transform (STFT) given by
\begin{align}
\mathrm{STFT}_g f(x,\omega)=\langle f, M_\omega T_x g\rangle_{L^2(\mathbb{R}^d)}=
\int_{\mathbb{R}^d}
f(t)\overline{g(t-x)} e^{-2\pi i \omega \cdot t} \mathrm{d} t \, ,
\end{align}
whenever the integral makes sense. Otherwise, we fix $g\in
\mathcal{S}(\mathbb{R}^d)$ and extend the STFT to tempered
distributions $\mathcal{S}'(\mathbb{R}^d)$ by interpreting 
the bracket $\langle f,g  \rangle$  as a dual pairing between an
element  $f\in \mathcal{S}'(\mathbb{R}^d)$ and
$g\in \mathcal{S}(\mathbb{R}^d)$.  For more information on the STFT the
reader is referred to \cite{Gr01_Book}.

We take the liberty to drop the center  $\{0\}\times \{0\}\times
\mathbb{T}$ of   $\mathcal{G}_{\mathrm{H}}$ and consider function spaces on $\mathbb{R}^{2d}$
instead of $\mathcal{G}_{\mathrm{H}}$. As a standard example we take the mixed-norm spaces
$L^{p,q}_m(\mathbb{R}^{2d})$ for $1\leq p,q\leq \infty$ and some $w$-moderate weight function $m$ on $\mathbb{R}^{2d}$ with the norm
\[
\|F\;|L^{p,q}_m(\mathbb{R}^{2d})\| \,:=\, \left(\int_{\mathbb{R}^d}\left(\int_{\mathbb{R}^d}
|F(x,\omega)|^p m(x,\omega)^p \mathrm{d}x\right)^{q/p}\mathrm{d}\omega\right)^{1/q}.
\]
The modulation spaces are obtained as the coorbits of
$L^{p,q}_m(\mathbb{R}^{2d})$ with respect to the Schr\"odinger representation $\pi$

\[
M^{p,q}_m(\mathbb{R}^d) \,=\, \mathrm{Co} L^{p,q}_m(\mathbb{R}^{2d}) \,=\, \{f \in \mathcal{S}'(\mathbb{R}^{d}):\quad \mathrm{STFT}_g f \in L^{p,q}_m(\mathbb{R}^{2d})\}.
\]
for fixed non-zero $g\in \mathcal{S}(\mathbb{R}^d)$. 
For the Heisenberg group many technical
subtleties of the general set-up of coorbit space theory  disappear.
For instance $W(L^\infty,
L^1_w)(\mathcal{G}_{\mathrm{H}})=W^R(L^\infty,
L^1_w)(\mathcal{G}_{\mathrm{H}})$ 
and 
$$\mathbb{B}_w=\mathbb{A}_w=M^{1,1}_w(\mathbb{R}^{2d})$$
 (cf. \cite[Lemma 7.2]{FeGr89_2}).
As long as $w$ and $m$ have polynomial growth, one may use the
Schwartz class $\mathcal{S}(\mathbb{R}^d) \subset \mathcal{A}_w$ as a
convenient  space of test functions. 

In the context of modulation spaces and the Heisenberg group, the
natural discrete sets are lattices, i.e., discrete co-compact
subgroups of the form $\Lambda = A \mathbb{Z}^{2d}$ for some
invertible $2d \times 2d$-matrix $A$.  Let $G(g, \Lambda ):=\{\pi
(\lambda)g:\; \lambda \in \Lambda \}$ be the orbit of $g$ under
$\Lambda $ (a so-called \emph{Gabor system}).

Given a symbol $\sigma \in \mathcal{S}'(\mathbb{R}^{2d})$, the
pseudodifferential operator $\sigma ^w$ is informally given by 
\[\sigma ^w f= \int_{\mathbb{R}^d} \int_{\mathbb{R}^d}\widehat{\sigma}(\xi, u)
e^{-\pi i \xi u} T_{-u}M_\xi f \;\mathrm{d}u \; \mathrm{d}\xi,\]
whenever the integral makes sense, otherwise it is interpreted in the weak sense. 	
The mapping $\sigma \mapsto \sigma ^w$ is called the Weyl transform.

The abstract  Definition  \ref{definition_molecules} can be rephrased
as follows (cf. \cite{Gr04} and 
\cite[Definition 7.1]{GrRz07}). 
\begin{definition}\upshape \label{TF_molecules}
Fix a non-zero $g\in M^{1,1}_w(\mathbb{R}^d)$. 
A collection of functions $\{m_{\lambda}\}_{\lambda \in \Lambda }$
forms a {\em set of time-frequency molecules}, 
if there exists a function $H\in  L^1_w(\mathcal{G}_{\mathrm{H}})$ such that
\begin{align*}
|\langle m_\lambda,\pi(z)g \rangle|\leq H(z-\lambda), \quad \lambda
\in \Lambda \, .
\end{align*}
\end{definition}
In our language, the main theorem of \cite{Gr06} (cf. also
\cite[Proposition 7.1]{GrRz07}) can be formulated as follows. [We
write $j$ for the rotation mapping $j(z_1,z_2)=(z_2,-z_1)$ with
$(z_1,z_2)\in \mathbb{R}^{2d}$.] 

\begin{proposition}
Fix a non-zero $g\in M_w^{1,1}$  and suppose that $\mathcal{G}(g, \Lambda)$ is a Gabor frame for $L^2(\mathbb{R}^d)$.
Then the following are equivalent.

\begin{itemize}
\item[(i)] $\sigma\in M_{w\circ j^{-1}}^{\infty,1}(\mathbb{R}^{2d})$.

\item[(ii)] There exists a function $H\in
  L^1_w (\mathbb{R}^{2d})$ such that 
$$
|\langle \sigma ^w \pi (w)g , \pi (z) g\rangle | \leq H(z-w), \qquad
w,z \in \mathbb{R}^{2d} \, .
$$
\item[(iii)] There is a function $H\in W(L^\infty , L^1_w)(\mathbb{R}^{2d})$ such
  that the corresponding pseudodifferential operator $\sigma ^w$ 
            maps the  time-frequency shifts $\{\pi(\lambda )g\}$ to
time-frequency  molecules $\{m_\lambda\}_{\lambda \in \Lambda }$ in
the sense of     Definition~\ref{TF_molecules} 
           with envelope function $H$.
\end{itemize}
\end{proposition}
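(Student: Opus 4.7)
The strategy is to reduce all three conditions to statements about the STFT of the symbol $\sigma$ via the well-known identification of matrix coefficients of Weyl operators with STFT values. Fix a non-zero $g \in M^{1,1}_w(\mathbb{R}^d)$; the cross-Wigner distribution $\Phi := W(g,g)$ lies in $M^{1,1}_w(\mathbb{R}^{2d})$ and is therefore a legitimate window for the STFT on $\mathbb{R}^{2d}$. The key identity
\[
\bigl|\langle \sigma^w \pi(w)g, \pi(z)g\rangle\bigr| \;=\; \bigl|V_\Phi \sigma\bigl((z+w)/2,\; j(z-w)\bigr)\bigr|,\qquad w,z\in\mathbb{R}^{2d},
\]
which follows from the duality $\langle \sigma^w f_1, f_2\rangle = \langle \sigma, W(f_2,f_1)\rangle$ and the covariance of the cross-Wigner distribution under time-frequency shifts, will serve as the dictionary. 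I prove the cycle (i) $\Leftrightarrow$ (ii) $\Rightarrow$ (iii) $\Rightarrow$ (i).

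For (i) $\Leftrightarrow$ (ii), I set $H_0(v) := \sup_{u \in \mathbb{R}^{2d}} |V_\Phi \sigma(u, j(v))|$. Because $j$ is a volume-preserving rotation, the substitution $\eta = j(v)$ converts the defining norm of $M^{\infty,1}_{w \circ j^{-1}}$ into $\|H_0\,|L^1_w\|$. Plugging $v = z-w$ into the key identity gives (ii) with envelope $H_0$; conversely, any $H$ admissible in (ii) dominates $H_0$ pointwise, so $H_0 \in L^1_w$, which is (i).

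The step (ii) $\Rightarrow$ (iii) consists of upgrading the envelope from $L^1_w$ to $W(L^\infty, L^1_w)$ and restricting to the lattice. I take the local maximum $\widetilde H(v) := \sup_{v' \in vU} H_0(v')$; its $L^1_w$-membership, which is exactly the $W(L^\infty, L^1_w)$ condition, follows from an equivalent Wiener-amalgam characterization of $M^{\infty,1}_{w \circ j^{-1}}$ in terms of local maximal functions of $V_\Phi \sigma$. This equivalence is at our disposal precisely because the window $\Phi$ itself lies in $M^{1,1}_w(\mathbb{R}^{2d})$. Restricting $w$ to $\lambda \in \Lambda$ in the pointwise bound then produces the time-frequency molecules in the sense of Definition~\ref{TF_molecules}.

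The closing implication (iii) $\Rightarrow$ (i) is where I expect the real work, since it demands that purely lattice-indexed bounds control a continuous modulation-space norm. Here I would invoke the Gabor frame hypothesis on $\mathcal{G}(g,\Lambda)$: expanding $\pi(w)g$ in the canonical dual frame produces coefficients $c_\lambda(w) = \langle \pi(w)g, \gamma_\lambda\rangle$ that are translates of the fixed function $V_\gamma g \in W(L^\infty, L^1_w)$, where $\gamma \in M^{1,1}_w$ is the dual window. Applying (iii) termwise yields
\[
\bigl|\langle \sigma^w \pi(w)g, \pi(z)g\rangle\bigr| \;\leq\; \sum_{\lambda \in \Lambda} |c_\lambda(w)|\, H(z-\lambda),
\]
and the right-hand side, being a discrete convolution of the type governed by Proposition~\ref{corrolary_convolution_relation}, is dominated in the variable $z-w$ by a function in $L^1_w$. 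Taking a supremum over $(z+w)/2$ and translating back through the STFT identity delivers (i). The Wiener-amalgam quality of $H$, rather than mere $L^1_w$, is precisely what makes this last convolution estimate go through, and the Gabor frame assumption is precisely what closes the gap between the lattice and the continuum.
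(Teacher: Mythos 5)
Your proposal is correct in outline, but it takes a genuinely different route from the paper. The paper's proof is essentially a two-line reduction: the equivalence $(i)\Leftrightarrow(ii)$ is quoted wholesale from \cite{Gr06}, the implication $(ii)\Rightarrow(iii)$ is the observation that restricting $w$ to $\lambda\in\Lambda$ turns the pointwise bound into the molecule condition of Definition~\ref{TF_molecules}, and $(iii)\Rightarrow(i)$ restricts further to $z=\mu\in\Lambda$ and invokes the \emph{discrete} almost-diagonalization characterization from \cite{Gr06}, which is exactly where the Gabor frame hypothesis enters. You instead reconstruct the content of \cite{Gr06} from scratch: the Wigner/STFT dictionary $|\langle\sigma^w\pi(w)g,\pi(z)g\rangle|=|V_\Phi\sigma((z+w)/2,j(z-w))|$ with $\Phi=W(g,g)$ reduces $(i)\Leftrightarrow(ii)$ to a change of variables, the amalgam upgrade of the envelope handles $(ii)\Rightarrow(iii)$, and the dual-window expansion plus a semi-discrete convolution closes the cycle. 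What your approach buys is a self-contained argument that makes visible where each hypothesis is used; what it costs is that you silently lean on two nontrivial external facts. First, the claim that the canonical dual window $\gamma=S^{-1}g$ again lies in $M^{1,1}_w$ is the Gr\"ochenig--Leinert spectral invariance theorem (see \cite{Gr04,GrRz07}); it is not free, and without it your coefficients $c_\lambda(w)$ need not be translates of a fixed function in $W(L^\infty,L^1_w)$. Second, Proposition~\ref{corrolary_convolution_relation} as stated gives, for each fixed $w$, a \emph{norm} bound on $z\mapsto\sum_\lambda|c_\lambda(w)|H(z-\lambda)$; what you actually need is a pointwise envelope $K(z-w)$ with $K\in L^1_w$ \emph{uniformly in} $w$. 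This does hold, because $\sum_{\mu\in\Lambda-w}G(\mu)H(v-\mu)\le C\,(G_\sharp*H_\sharp)(v)$ with a constant independent of the translate $\Lambda-w$ (the relative separation constants of $\Lambda-w$ do not depend on $w$), and $L^1_w*L^1_w\subseteq L^1_w$ -- but that uniform pointwise amalgam estimate should be stated and proved rather than attributed to Proposition~\ref{corrolary_convolution_relation}. With those two points supplied, your argument is a valid and more informative alternative to the paper's citation-based proof.
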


\begin{proof}
 The equivalence  $(i) \Leftrightarrow (ii)$ was proved in
 \cite{Gr04}. 

Let $m_\lambda = \sigma ^w(\pi (\lambda )g)$, then by (ii) we have 
$|\langle m_\lambda , \pi (z) g\rangle | \leq H(z-\lambda )$, and thus the
set  of $m_\lambda  , \lambda \in \Lambda $, is a set of
time-frequency molecules in the sense of
Definition~\ref{TF_molecules}.

Conversely, if $m_\lambda = \sigma ^w(\pi (\lambda )g)$ is a set of
molecules, then we have $|\langle \sigma ^w(\pi (\lambda )g) , \pi
(\mu) g\rangle | \leq H(\mu -\lambda  )$. Again, by  \cite{Gr06} this
property implies that $\sigma \in M_{w\circ
  j^{-1}}^{\infty,1}(\mathbb{R}^{2d})$. 
\end{proof}

Since the modulation spaces are the coorbit spaces for the
Schr\"odinger representation, Theorem~\ref{main_theorem} now implies
the boundedness of pseudodifferential 
operators with symbol in $M_{w\circ
  j^{-1}}^{\infty,1}(\mathbb{R}^{2d})$ on a large class of modulation
spaces. See ~\cite[Thm.~14.5.6]{Gr01_Book} and~\cite{Toft04a} for different
proofs. 

\begin{corollary}
If $\sigma\in M_{w\circ j^{-1}}^{\infty,1}(\mathbb{R}^{2d})$, then
$\sigma ^w$ is bounded simultaneously on all modulation spaces
$M^{p,q}_m(\mathbb{R}^{d})$ for $1\leq p,q\leq \infty $ and every
$w$-moderate weight function $m$. 
\end{corollary}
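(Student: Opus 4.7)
The plan is to read this Corollary as a direct application of the abstract boundedness theorem (Theorem~\ref{main_theorem}) once the preceding Proposition is put into its framework. The preceding Proposition already translates the symbol condition $\sigma\in M^{\infty,1}_{w\circ j^{-1}}(\mathbb{R}^{2d})$ into the statement ``$\sigma^w$ maps the atoms $\pi(\lambda)g$ to a set of time-frequency molecules with envelope in $W(L^\infty,L^1_w)$'', so most of the real work is done. What remains is to assemble the hypotheses of Theorem~\ref{main_theorem} in the concrete setting of the Schr\"odinger representation.

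First I would fix a non-zero window $g\in\mathcal{S}(\mathbb{R}^d)\subset M^{1,1}_w=\mathbb{B}_w$ and a sufficiently dense lattice $\Lambda=A\mathbb{Z}^{2d}$ so that $\mathcal{G}(g,\Lambda)=\{\pi(\lambda)g:\lambda\in\Lambda\}$ is a Gabor frame for $L^2(\mathbb{R}^d)$; by Theorem~\ref{atomic_decomposition} applied to the Schr\"odinger representation, this system is simultaneously a Banach frame and provides an atomic decomposition for every coorbit space $\mathrm{Co}\,L^{p,q}_m=M^{p,q}_m$ with the same associated weight $w$. Next, the Proposition just proved says that for $\sigma\in M^{\infty,1}_{w\circ j^{-1}}$ the images $m_\lambda:=\sigma^w(\pi(\lambda)g)$ form a set of time-frequency molecules with some envelope $H\in W(L^\infty,L^1_w)(\mathbb{R}^{2d})$; and on the Heisenberg group $W(L^\infty,L^1_w)=W^R(L^\infty,L^1_w)$, so this $H$ is an admissible envelope for Definition~\ref{definition_molecules}.

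The one hypothesis of Theorem~\ref{main_theorem} not yet stated is that $T=\sigma^w$ is weak-$*$ continuous from $\mathcal{H}^1_w=M^{1,1}_w$ into $(\mathcal{H}^1_w)^\urcorner=M^{\infty,\infty}_{1/w}$. I expect this to be the main (and really the only) obstacle, but it is a rather mild one: the pointwise bound $|\langle\sigma^w f_1,f_2\rangle|=|\langle\sigma,W(f_2,f_1)\rangle|\leq \|\sigma\,|\,M^{\infty,1}_{w\circ j^{-1}}\|\cdot\|W(f_2,f_1)\,|\,M^{1,\infty}_{1/(w\circ j^{-1})}\|$ (where $W(f_2,f_1)$ is the cross-Wigner distribution), together with the fact that the Wigner transform maps $M^{1,1}_w\times M^{1,1}_w$ continuously into $M^{1,1}_{w\circ j^{-1}}\subset M^{1,\infty}_{1/(w\circ j^{-1})}$, yields exactly the required bound $\|\sigma^w f\,|\,M^{\infty,\infty}_{1/w}\|\lesssim \|\sigma\,|\,M^{\infty,1}_{w\circ j^{-1}}\|\cdot \|f\,|\,M^{1,1}_w\|$; this is the standard ``kernel theorem'' style estimate for Weyl operators and may simply be cited.

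With all hypotheses verified, Theorem~\ref{main_theorem} applies and delivers an extension of $\sigma^w$ to a bounded operator on $\mathrm{Co}\,Y=M^{p,q}_m(\mathbb{R}^d)$, with operator norm controlled by $\|H\,|\,W^R(L^\infty,L^1_w)\|$, and hence ultimately by $\|\sigma\,|\,M^{\infty,1}_{w\circ j^{-1}}\|$. Since the envelope $H$ depends only on $\sigma$, $g$, and $w$, and not on the specific solid space $Y=L^{p,q}_m$, the remark following Theorem~\ref{main_theorem} shows that the same operator $\sigma^w$ is bounded simultaneously on every $M^{p,q}_m$, $1\leq p,q\leq\infty$, for every $w$-moderate weight $m$, which is precisely the assertion of the Corollary.
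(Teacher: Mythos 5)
Your proposal is correct and follows essentially the same route as the paper, which states the Corollary as an immediate consequence of the preceding Proposition (giving the molecules) together with Theorem~\ref{main_theorem} and the identification $M^{p,q}_m=\mathrm{Co}\,L^{p,q}_m$. Your additional verification of the continuity of $\sigma^w$ from $M^{1,1}_w$ to $M^{\infty,\infty}_{1/w}$ via the Wigner-distribution duality is a hypothesis the paper leaves implicit, and it is handled correctly.
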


\subsection{The Affine Group and Time-Scale Molecules}

We next consider the affine group
$\mathcal{G}_{\mathrm{A}} = \mathbb{R}^d \times \mathbb{R}_+$
 with multiplication $(x,s)\cdot(x',s') = (x+sx',ss')$ for $x,x'\in \mathbb{R}^d$ and $s,s'>0$.
Let the dilation operator  be  given by $D_sf(x)=s^{-d/2}f(s^{-1}x)$ with  $s>0$.
A unitary representation of  $\mathcal{G}_{\mathrm{A}}$ acts on $L^2(\mathbb{R}^d)$  by translations and dilations:
\[\pi(x,s) g(t)=T_xD_s g(t)= s^{-d/2}g\left(\frac{t-x}{s}\right).\]

This representation  is square-integrable but reducible. Nevertheless it possesses an abundance of admissible vectors $g$ for which
the reproducing formula \eqref{rep_form_H} holds. Another way to deal with the reducibility is to study the extended affine group
$\mathbb{R}^d \times \mathbb{R}_+ \times SO(d)$ and its representations $\pi_1(x,s,R) f(t)=s^{-d/2}f(s^{-1} R^{-1}(t-x))$
with $R\in SO(d)$. Then $\pi_1$ is irreducible.  For
rotation-invariant functions $g$ we have $\pi_1(x,s,R)g=\pi(x,s)g$, so
we may as well work with the reducible $\pi$.  The representation
coefficients of $\pi$ are nothing 
but the continuous wavelet transform, which 
 is defined by
\[W_g f(x,s) = \langle f, \pi(x,s)
g\rangle=s^{-d/2}\int_{\mathbb{R}^d} f(t)
\overline{g\left(\frac{t-x}{s}\right)}\mathrm{d}t \]
for $f,g \in L^2(\mathbb{R}^d), g\neq 0$. 

We first identify the coorbit spaces with respect to the
representation $\pi $ of $\mathcal{G}_A$. Let $1\leq p,q< \infty$ and
$w(x,s) = s^{-\sigma}$ for $\sigma\in \mathbb{R}$.
  The mixed norm space
$L^{p,q}_\sigma(\mathcal{G}_{\mathrm{A}})$ is defined by the   norm
\[\|F\;|L^{p,q}_\sigma(\mathcal{G}_{\mathrm{A}})\|=\left(\int_{0}^\infty \left(\int_{\mathbb{R}^d} |F(x,s)|^p
\;\mathrm{d}x\right)^{q/p}s^{-\sigma
q}\frac{\mathrm{d}s}{s^{d+1}}\right)^{1/q}\]
with the usual modifications when $p=\infty $ or$q=\infty$.

Recall the classical definition of the  homogenous Besov spaces.
Let  $\varphi\in \mathcal{S}(\mathbb{R}^d)$ with  $\mathrm{supp} (\varphi)\subset \{y\in\mathbb{R}^d:\;|y|<2\}$
and $\varphi(x)=1$ if $|x|\leq 1$ and set
$\varphi_j(x)=\varphi(2^{-j}x)-\varphi(2^{-j+1}x)$, $j\in \mathbb{Z}$.
For  $1\leq p,q\leq \infty, \; \sigma\in \mathbb{R}$,
the homogenous Besov space  $\dot{B}_{pq}^{\sigma}(\mathbb{R}^d)$ is
the set of all tempered distribution modulo polynomials
$f\in 	\mathcal{S}'/ \mathcal{P}(\mathbb{R}^d)$ such that
		\begin{align} 				
		\big\|f\;|\dot{B}_{pq}^{\sigma}(\mathbb{R}^d)\big\|=
		\left(\sum_{j\in \mathbb{Z}}2^{j\sigma
                    q}\big\|\mathcal{F}^{-1} (\varphi_j\widehat{f})|
		L^p(\mathbb{R}^d)\big\|^q \right)^{1/q}
		\end{align}
	is finite, with the usual modification for  $q=\infty$. 	
A result of Triebel \cite{Tr88} yields the equivalent norm on $\dot{B}_{pq}^{\sigma}(\mathbb{R}^d)$:
\[\left(\int_0^\infty s^{-q(\sigma+d/2-d/q)}\|W_g f(\cdot,s)\;| L^p(\mathbb{R}^d)\|^q \frac{\mathrm{d}s}{s^{d+1}}\right)^{1/q}
=\big\|W_gf\;|L^{p,q}_{\sigma+d/2-d/q}(\mathcal{G}_{\mathrm{A}})\big\|.\]
Triebel's result reveals that  the  homogenous Besov spaces coincide
with some  coorbits spaces of the affine group $\mathcal{G}_A$.   More precisely,
\[\dot{B}_{pq}^{\sigma}(\mathbb{R}^d)=\mathrm{Co}(L^{p,q}_{\sigma+d/2-d/q}(\mathcal{G}_{\mathrm{A}})).\] 

Next we compare classical molecules as in \cite{FrJaWe91_Book} and the 
coorbit  molecules  according to Definition \ref{definition_molecules}. 
Let us start by describing the classical molecules.
For  $k=(k_1,\ldots,k_d)\in \mathbb{Z}^d$ and  $j\in \mathbb{Z}$, a dyadic cube is given by
$Q=Q_{jk}=\{(x_1,\ldots,x_d)\in \mathbb{R}^d: k_i\leq 2^j x_i <k_i+1\}$.
Its left corner is  $x_Q=x_{Q_{jk}}=2^{-j}k$, its side length  $\ell(Q)=\ell(Q_{jk})=2^{-j}$, and its volume is $|Q|=2^{-jd}$.
 For $M,N\in \{-1,0,1,2,\ldots\}$ a classical smooth $(M,N)$-molecule associated to a dyadic cube $Q$ is a function $m_Q$
 satisfying the estimates
\begin{align}\label{condition_decay}|\mathrm{D}^{\alpha}m_Q(x)|\leq |Q|^{-1/2-|\alpha|/d}\left\{1+\frac{|x-x_Q|}{l(Q)}\right\}^{-M} \;\; \text{ for }
                 \;\;  |\alpha| \leq M,\; x\in \mathbb{R}^d.
                 \end{align}
and the moment conditions
\begin{align}\label{condition_moments}
\int_{\mathbb{R}^d}x^{\beta}m_Q(x)\;\mathrm{d}x=0 \;\; \text{ for } \;\; |\beta|\leq N.
\end{align}

This  notion of a classical smooth molecule goes back to 
\cite{FrJa85}, see also~\cite{FJ90,FHJW,FrJaWe91_Book}.
[The atoms in classical analysis are defined similarly with the decay condition \eqref{condition_decay}
being replaced by an appropriate support condition.]
To understand how the conditions~\eqref{condition_decay}
and~\eqref{condition_moments} can be expressed by the wavelet
transform, we note that  the decay condition \eqref{condition_decay}
can be rephrased as 
\begin{align}\label{condition_decay_2}
|\mathrm{D}^{\alpha}m_{Q_{jk}}(x)|\leq 2^{jd/2+j|\alpha|}(1+|2^j x-k|)^{-M} \;\; \text{ for }
                 \;\;  |\alpha| \leq M. 
  \end{align} 
and                 
the moment conditions \eqref{condition_moments} as 
\[\widehat{m_{Q_{jk}}}(\xi)\leq C_n\ |\xi|^n, \quad |\xi|\rightarrow
0,\;\text{ for all }  n\leq N.\] 
The next proposition describes the decay of wavelet transform of the classical molecules.

\begin{proposition}\label{wavemom}
Let $g$ and $f$ satisfy the conditions \eqref{condition_decay_2} with $j=k=0$ and \eqref{condition_moments}.
Then there are numbers $\alpha, \beta, \gamma \in \mathbb{N}$ depending only on $M,N$ and a constant  $C_{\alpha, \beta, \gamma}>0$
such that
\begin{align}\label{wavelet_estimate}
|W_g f(x,s)| \leq C_{\alpha, \beta, \gamma} s^{\alpha} (1+s)^{-\beta} (1+|x|)^{-\gamma}.
\end{align}
\end{proposition}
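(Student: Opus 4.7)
The plan is to split the scale parameter into the ranges $s \leq 1$ and $s \geq 1$ and, in each range, use the moment condition of one of the two functions to gain an $s^{N+1}$ factor from Taylor cancellation against the smoothness and decay of the other.

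\textbf{Small scales $s\le 1$.} After substituting $u=(t-x)/s$,
$$W_g f(x,s)=s^{d/2}\int_{\mathbb{R}^d}f(x+su)\,\overline{g(u)}\,du.$$
I would Taylor-expand $f(x+su)$ at the base point $x$ to order $N$. The moment conditions \eqref{condition_moments} on $g$ annihilate the polynomial part, so only the remainder $R_N(x,su)$ contributes, which by \eqref{condition_decay_2} satisfies
$$|R_N(x,su)|\leq C\,s^{N+1}|u|^{N+1}\sup_{\theta\in[0,1]}(1+|x+\theta su|)^{-M}.$$
To extract spatial decay I would split the $u$-integral at $|u|=|x|/(2s)$: on the inner region one has $|x+\theta su|\ge|x|/2$, directly producing a factor $(1+|x|)^{-M}$; on the outer region, I would split the decay of $g$ as $(1+|u|)^{-M}=(1+|u|)^{-M_1}(1+|u|)^{-M_2}$ with $M=M_1+M_2$, using $M_1$ to make $|u|^{N+1}$ integrable and $M_2$ to convert $|u|>|x|/(2s)$ into a factor $(s/|x|)^{M_2}$. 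Provided $M$ is large enough in terms of $N+d$, this combines to give
$$|W_g f(x,s)|\leq C\,s^{d/2+N+1}(1+|x|)^{-\gamma}\qquad (s\le 1)$$
for some $\gamma=\gamma(M,N,d)\in\mathbb{N}$.

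\textbf{Large scales $s\ge 1$.} The cleanest route is to exploit the symmetry of the hypotheses on $f$ and $g$ through the identity
$$W_g f(x,s)=\overline{W_f g(-x/s,\,1/s)},$$
which follows from a direct change of variables. Applying the small-scale estimate just obtained to $W_f g$ at $\sigma:=1/s\le 1$ and $y:=-x/s$ yields
$$|W_g f(x,s)|\leq C\,s^{-(d/2+N+1)}(1+|x|/s)^{-\gamma},$$
and for $s\ge 1$ one has $(1+|x|/s)^{-\gamma}\le s^{\gamma}(1+|x|)^{-\gamma}$, so the factor $(1+|x|)^{-\gamma}$ persists at the cost of a power of $s$.

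\textbf{Assembly and main obstacle.} Joining the two regimes, $s^{\alpha}(1+s)^{-\beta}$ captures both $s^{d/2+N+1}$ on $(0,1]$ and $s^{\gamma-d/2-N-1}$ on $[1,\infty)$ upon setting $\alpha:=\lceil d/2\rceil+N+1$ and $\beta:=2\alpha-\gamma$ (enlarging $M$ if necessary to keep $\beta$ a positive integer). The principal technical obstacle is the region-splitting at small scales: one must balance the polynomial growth $|u|^{N+1}$ coming from the Taylor remainder against the decay of $g$ so as to produce the spatial factor $(1+|x|)^{-\gamma}$ simultaneously with the gain $s^{N+1}$. This is what forces the quantitative lower bound on $M$ in terms of $N+d$ and pins down the admissible value of $\gamma$; once this splitting is laid out, everything else is routine estimation.
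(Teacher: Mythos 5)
Your argument is sound in substance, but note that the paper does not actually prove this proposition: it is attributed to Holschneider, with the remark that the estimate is hidden in the proofs of his Theorems 11.0.2, 12.0.1 and 19.0.1. What you have written is essentially the standard self-contained derivation that lies behind that citation: Taylor expansion of $f$ around $x$ killed by the vanishing moments of $g$ at small scales, the exact symmetry $W_gf(x,s)=\overline{W_fg(-x/s,1/s)}$ to transfer the estimate to large scales, and a dyadic-type splitting of the $u$-integral at $|u|=|x|/(2s)$ to extract the spatial decay. All of these steps check out (I verified the symmetry identity and the inequality $(1+|x|/s)^{-\gamma}\le s^{\gamma}(1+|x|)^{-\gamma}$ for $s\ge1$), and the argument makes transparent why the exponents depend only on $M$, $N$ (and $d$), which the paper's citation obscures.

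Three small bookkeeping points should be repaired. First, Taylor expansion to order $N$ with remainder controlled by $(N+1)$-st derivatives implicitly requires $M\ge N+1$, and the integrability and decay bookkeeping requires roughly $M>N+1+d+\gamma$; this is consistent with the paper's standing assumption that $M,N$ are large, but you should state it. Second, since for $s\le1$ you have proved the bound $Cs^{d/2+N+1}$, any admissible $\alpha$ must satisfy $\alpha\le d/2+N+1$, so you should take $\alpha=\lfloor d/2\rfloor+N+1$ rather than $\lceil d/2\rceil+N+1$ (the ceiling overshoots when $d$ is odd and asserts more small-scale decay than you proved). Third, your parenthetical ``enlarging $M$ if necessary to keep $\beta$ a positive integer'' goes the wrong way: with $\beta=2\alpha-\gamma$ and $\gamma=M-N-1-d$, increasing $M$ increases $\gamma$ and hence decreases $\beta$. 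The correct fix is to decrease $\gamma$, which is always permitted because $(1+|x|)^{-\gamma}\le(1+|x|)^{-\gamma'}$ for $\gamma'\le\gamma$; one then chooses $\gamma$ in the window needed by the application (for Proposition \ref{proposition_Besov} one needs $\gamma>d$ together with $\beta>\alpha+\sigma$, which forces $N$ to be large, not just $M$).
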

By improving the quality of the window we can achieve a stronger result. 
\begin{remark}\upshape \label{remark_Besov_infty}
In particular, if $g \in \mathcal{S}(\mathbb{R}^d)$ has
all  moments vanishing, 
then  for every  $\alpha, \beta, \gamma\in \mathbb{N}$ there is a constant $C_{\alpha, \beta, \gamma}>0$ such that
\begin{align}\label{wavelet_estimate_infty}
|W_g g(x,s)| \leq C_{\alpha, \beta, \gamma} s^{\alpha} (1+s)^{-\beta} (1+|x|)^{-\gamma}.
\end{align}
\end{remark}
Proposition~\ref{wavemom} is due to Holschneider~\cite{Ho95_Book},
though the result is somewhat hidden in the proofs  of his Theorems
11.0.2,   12.0.1, and  19.0.1. (\cite{Ho95_Book} uses a different
normalization of the wavelet transform and treats the  dimensions
$d=1$ and $d>1$ separately).


The next proposition clarifies the relation between classical
molecules  and  coorbit molecules  for the affine group. 
\begin{proposition}\label{proposition_Besov}
Fix a weight function $w$ on $\mathcal{G}_A$. 
Then for $M,N$ sufficiently large, every set
of $(M,N)$-molecules $(m_{Q_{jk}})$ is a set of coorbit molecules in the sense of Definition \ref{definition_molecules}.
\end{proposition}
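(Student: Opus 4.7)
The plan is to reduce the defining inequality \eqref{formula_definition_molecules} to the covariance property of the wavelet transform combined with Proposition~\ref{wavemom}, and then to verify that the resulting common envelope lies in $W^R(L^\infty, L^1_w)$ on $\mathcal{G}_{\mathrm{A}}$.

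First, to each dyadic cube $Q=Q_{jk}$ I associate the group element $(x_Q,\ell(Q))=(2^{-j}k,2^{-j})\in \mathcal{G}_{\mathrm{A}}$ and write $m_Q=\pi(x_Q,\ell(Q))\tilde m_Q$; explicitly, $\tilde m_Q(y)=\ell(Q)^{d/2}m_Q(x_Q+\ell(Q)y)$. Using $|Q|^{-1/2-|\alpha|/d}=\ell(Q)^{-d/2-|\alpha|}$, condition \eqref{condition_decay_2} rescales to $|\mathrm{D}^\alpha \tilde m_Q(y)|\leq(1+|y|)^{-M}$ for $|\alpha|\leq M$, uniformly in $Q$, and a similar change of variables transfers the moment conditions \eqref{condition_moments} from $m_Q$ to $\tilde m_Q$. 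Thus the family $\{\tilde m_Q\}$ satisfies the hypotheses of Proposition~\ref{wavemom} uniformly in $Q$.

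Second, the group law yields the covariance identity $W_g(\pi(a,b)f)(x,s)=W_g f\bigl((x-a)/b,\,s/b\bigr)=L_{(a,b)}(W_g f)(x,s)$. Applied to $m_Q=\pi(x_Q,\ell(Q))\tilde m_Q$ and combined with Proposition~\ref{wavemom} (selecting, as in Remark~\ref{remark_Besov_infty}, a Schwartz window $g$ with many vanishing moments so that the exponents below can be made as large as we wish), this gives, for numbers $\alpha,\beta,\gamma$ that depend only on $M,N$ and grow with them, the uniform estimate
\[
|V_g m_Q(z)|\leq C\,L_{(x_Q,\ell(Q))}H(z),\qquad H(x,s):=s^{\alpha}(1+s)^{-\beta}(1+|x|)^{-\gamma}.
\]
This is precisely \eqref{formula_definition_molecules} with a common envelope $H$ and positions $x_i=(x_Q,\ell(Q))$.

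It remains to verify that $H\in W^R(L^\infty,L^1_w)$. Since the left Haar measure on $\mathcal{G}_{\mathrm{A}}$ is $dx\,ds/s^{d+1}$, and since the right local maximum $H^R_\sharp$ with respect to a fixed relatively compact neighborhood of $e$ stays dominated by a function of the same shape as $H$ (with slightly worse constants), the norm $\|H\;|\;W^R(L^\infty,L^1_w)\|$ reduces, up to constants, to a Haar integral of the form
\[
\int_0^\infty\!\!\int_{\mathbb{R}^d} s^{\alpha}(1+s)^{-\beta}(1+|x|)^{-\gamma}\,w(x,s)\,dx\,\frac{ds}{s^{d+1}},
\]
whose convergence (split according to $s\leq 1$ and $s\geq 1$) is secured once $\gamma$ exceeds the $x$-growth of $w$, once $\alpha-d$ dominates the $s$-growth of $w$ near zero, and once $\beta-\alpha$ dominates its growth at infinity. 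All these conditions can be arranged by taking $M$ and $N$ sufficiently large in Proposition~\ref{wavemom}. The main obstacle I anticipate is exactly this last bookkeeping step: carefully matching the exponents delivered by Proposition~\ref{wavemom} against the Jacobian $s^{-d-1}$ and the two-sided growth of the weight $w$ in the scale variable $s$, which is where the admissible range of $(M,N)$ becomes weight-dependent.
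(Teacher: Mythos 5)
Your first two steps are correct and in fact supply details that the paper leaves implicit: the paper jumps from Proposition~\ref{wavemom} directly to ``the natural candidate for an envelope is $H(x,s)=s^{\alpha}(1+s)^{-\beta}(1+|x|)^{-\gamma}$,'' whereas you make the underlying mechanism explicit by rescaling $m_Q=\pi(x_Q,\ell(Q))\tilde m_Q$, checking that the $(M,N)$-conditions for $\tilde m_Q$ are uniform in $Q$, and invoking the covariance $W_g(\pi(a,b)f)=L_{(a,b)}W_gf$. That part matches the paper's intent and is sound.

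The gap is in your verification that $H\in W^R(L^\infty,L^1_w)$, which is the one genuinely computational step of the paper's proof. You assert that the right local maximum function $H^R_\sharp$ is ``dominated by a function of the same shape as $H$,'' and accordingly reduce the norm to $\int\!\!\int H(x,s)\,w(x,s)\,dx\,ds/s^{d+1}$. This is false on $\mathcal{G}_{\mathrm{A}}$: by definition $F^R_\sharp(x,s)=\sup_{(u,v)\in U^{-1}(x,s)^{-1}}|F(u,v)|$ samples $H$ near the group \emph{inverse} $(x,s)^{-1}=(-x/s,1/s)$, and since the affine group is non-unimodular this changes the shape of the envelope rather than just the constants. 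The paper's explicit computation with $U=B(0,a)\times[b^{-1},b]$ gives
\[
H^R_\sharp(x,s)\;\lesssim\; s^{\beta-\alpha}(1+s)^{-\beta}\bigl(1+|x|/s\bigr)^{-\gamma},
\]
so the behaviour in $s$ near $0$ and $\infty$ is swapped relative to $H$, and the spatial factor $(1+|x|/s)^{-\gamma}$ contributes an extra $s^{d}$ after the $x$-integration. Consequently the correct integrability conditions (for $w(x,s)=s^{-\sigma}$) are $\gamma>d$ and $\beta>\alpha+\sigma>0$, not the conditions you read off from your displayed integral (e.g.\ your requirement $\alpha>\sigma+d$ near $s=0$ is not the relevant one). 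The conclusion of the proposition survives, since the exponents from Proposition~\ref{wavemom} can still be arranged to satisfy the correct inequalities for $M,N$ large, but as written your argument does not establish membership in $W^R(L^\infty,L^1_w)$ --- you have effectively estimated the $W(L^\infty,L^1_w)$-type quantity instead. (For the Heisenberg group the two coincide, but for $\mathcal{G}_{\mathrm{A}}$ the distinction is exactly what this computation is about.)
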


\begin{proof}
Note that the dyadic cube $Q_{jk}=2^{-j}(k+[0,1]^d)$ is attached to the point $x_{jk}=(2^{-j}k,2^{-j})\in \mathcal{G}_{\mathrm{A}}$.
To show that $(m_{Q_{jk}})$ is a set of coorbit molecules with
envelope function $H$, we need to show that 
\[|W_g m_{Q_{jk}}(x,s)|\leq H((2^{-j}k,2^{-j})^{-1}(x,s)) =H(2^jx-k,2^j s).\]
In view of estimate~\eqref{wavelet_estimate}, the natural candidate
for an envelope  $H$ is the function
\[H(x,s)= s^{\alpha} (1+s)^{-\beta} (1+|x|)^{-\gamma} \]
with $ \alpha, \beta, \gamma\in \mathbb{N}$ depending on $M,N$. Our 
task is  to show that $H\in  W^R(L^\infty,L_w^1)$. We must first
estimate the local maximum function $F^R_\sharp $ of $H$. 
 We set $U=B(0,a)\times [b^{-1},b]$ with $a>0$ and $b>1$.
Then 
\begin{align*}
F_\sharp^R (x,s)&= \sup _{(u,v) \in U\inv (x,s)\inv } |H(u,v)| 
=\sup _{(y,r)\in U}  H((y,r)\inv ((x,s)\inv )\\
&=\sup_{(y,r)\in U} \left|H\left(- \frac{x+sy}{sr}, \frac{1}{sr}\right)\right|=\sup_{(y,r)\in U}
\left(\frac{1}{sr}\right)^{\alpha}\left(1+\frac{1}{sr}\right)^{-\beta}\left(1+\frac{|x+sy|}{sr}\right)^{-\gamma}\\
&=\sup_{y\in B(0,a)} \sup_{r\in [b^{-1},b]}(sr)^{-\alpha+\beta}(1+sr)^{-\beta} \left(1+|x/sr+ y/r|\right)^{-\gamma}\\
&\leq C_b \sup_{y\in B(0,a)}  s^{-\alpha+\beta}(1+s)^{-\beta}
\left(1+|x/s+ y|\right)^{-\gamma} \\
&\leq  C_{ab}  s^{-\alpha+\beta}(1+s)^{-\beta} \left(1+|x/s|\right)^{-\gamma}.
\end{align*}
In the last estimate the moderateness of the weight
$(1+|\cdot|)^{-\gamma}$ has been used.

The $W^R(L^\infty , L^1_w)$-norm of $H$ is then 
\begin{eqnarray*}
  \|H| W^R(L^\infty , L^1_w)\|&=& \int_{\mathbb{R}^d}\int_{0}^{\infty}
  F_\sharp^R (x,s)  s^{-\sigma} \mathrm{d}x \frac{\mathrm{d}s}{s^{d+1}}\;
   \\
&\leq &  C_{ab}   \int _0 ^\infty \int _{\mathbb{R}}
  s^{-\alpha+\beta}(1+s)^{-\beta} \left(1+|x/s|\right)^{-\gamma} \,
  \mathrm{d}x \frac{\mathrm{d}s}{s^{d+1}} \, ,
\end{eqnarray*}
and this integral converges, if $\gamma >d$ and $\beta > \alpha
+\sigma >0$. 
\end{proof}

Finally  we  apply Theorem~\ref{main_theorem}  to study the
boundedness of Hilbert transform on 
homogenous Besov spaces.
Recall that the Hilbert transform $\mathbf{H}$ of a function $f$ is given by
\[\mathbf{H}f(x)=\lim_{\varepsilon\rightarrow 0}\frac{1}{\pi}
\int_{|t|\geq \varepsilon} \frac{f(x-t)}{t}\mathrm{d}t,\] 
provided that the limit exists. The boundedness of $\mathbf{H}$ on
Besov spaces   follows from Fourier multiplier theorems
for Besov spaces, e.g.,  \cite{Tr83}, or from Lemari\'e's work on
Calder\`on-Zygmund operators on Besov spaces~\cite{Lem85}.  Here we show that it is an immediate consequence of Theorem \ref{main_theorem}.
\begin{proposition}
Let $1\leq p,q\leq \infty$ and $\sigma\in \mathbb{R}$. Then the
Hilbert transform is  bounded  on 
$\dot{B}_{pq}^{\sigma}(\mathbb{R}^d)$.
\end{proposition}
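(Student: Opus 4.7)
The plan is to apply Theorem~\ref{main_theorem} with $T = \mathbf{H}$. The decisive observation is that the Hilbert transform is the Fourier multiplier with the dilation-invariant symbol $-i\,\mathrm{sgn}(\xi)$, and hence commutes with the wavelet representation: for every $(x,s) \in \mathcal{G}_{\mathrm{A}}$,
\[\mathbf{H}\bigl(\pi(x,s) g\bigr) \;=\; \pi(x,s)\bigl(\mathbf{H} g\bigr).\]
I would fix as window a function $g \in \mathcal{S}(\mathbb{R}^d)$ whose Fourier transform $\hat g$ is smooth, compactly supported, and vanishes in a neighborhood of the origin. Such a $g$ has all moments vanishing and, by Remark~\ref{remark_Besov_infty}, belongs to $\mathbb{B}_w$ for every polynomial weight $w$; Theorem~\ref{atomic_decomposition} therefore produces a well-spread family $\{(x_i, s_i)\}$ so that $\{\pi(x_i,s_i)g\}$ is simultaneously a Banach frame for every coorbit space with canonical weight $w$. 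Since $\widehat{\mathbf{H} g}(\xi) = -i\,\mathrm{sgn}(\xi) \,\hat g(\xi)$ is also smooth and compactly supported away from $0$, $\mathbf{H}g$ is itself Schwartz with all vanishing moments.

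Setting $m_i := \mathbf{H}(\pi(x_i, s_i) g) = \pi(x_i, s_i)(\mathbf{H} g)$, the left-covariance of the wavelet transform gives
\[|V_g m_i(z)| \;=\; \bigl|W_g(\mathbf{H} g)\bigl((x_i, s_i)^{-1} z\bigr)\bigr| \;=\; L_{(x_i,s_i)} H(z),\]
with $H := |W_g(\mathbf{H} g)|$. The remaining task is to verify $H \in W^R(L^\infty, L^1_w)$. Because $g$ and $\mathbf{H}g$ both satisfy the hypotheses of Proposition~\ref{wavemom} with arbitrarily many vanishing moments, one obtains the refined envelope estimate
\[|W_g(\mathbf{H} g)(x,s)| \;\leq\; C_{\alpha, \beta, \gamma}\, s^\alpha (1+s)^{-\beta} (1+|x|)^{-\gamma}\]
for every $\alpha, \beta, \gamma \in \mathbb{N}$, exactly as in Remark~\ref{remark_Besov_infty}. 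The local-maximum computation carried out in the proof of Proposition~\ref{proposition_Besov} then shows that $H \in W^R(L^\infty, L^1_w)$ as soon as $\gamma > d$ and $\beta > \alpha + \sigma > 0$, which is arranged by taking $\alpha, \beta, \gamma$ large enough relative to the weight exponent associated with the triple $(p, q, \sigma)$.

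With the molecule envelope secured, $\mathbf{H}$ maps the atoms $\pi(x_i, s_i) g$ to coorbit molecules in the sense of Definition~\ref{definition_molecules}. The required continuity $\mathbf{H} : \mathcal{H}_w^1 \to (\mathcal{H}_w^1)^\urcorner$ follows from Lemma~\ref{synthesis_lemma}(ii) applied to an atomic expansion of any $f \in \mathcal{H}_w^1 = \mathrm{Co}(L^1_w)$, since the same envelope $H$ controls the synthesis side. Theorem~\ref{main_theorem} then extends $\mathbf{H}$ to a bounded operator on every coorbit space sharing the weight $w$, and Triebel's identification $\dot B^\sigma_{pq}(\mathbb{R}^d) = \mathrm{Co}(L^{p,q}_{\sigma + d/2 - d/q})$ yields the desired boundedness. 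I expect the only real bookkeeping obstacle to be matching $w$ with the coorbit weight on $\mathcal{G}_{\mathrm{A}}$ dictated by $(p, q, \sigma)$: each choice of Besov parameters pins down a specific weight, but since the envelope decay is arbitrary, the scheme goes through uniformly for every admissible triple.
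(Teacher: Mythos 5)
Your proposal is correct and follows essentially the same route as the paper: exploit the commutation $\mathbf{H}\pi(x,s)g=\pi(x,s)\mathbf{H}g$, note that $\mathbf{H}g$ is again Schwartz with all vanishing moments, take $H=|W_g(\mathbf{H}g)|$ as the envelope, and verify $H\in W^R(L^\infty,L^1_w)$ via Proposition~\ref{wavemom} and the computation in the proof of Proposition~\ref{proposition_Besov} before invoking Theorem~\ref{main_theorem} and Triebel's identification. Your extra paragraph checking the continuity of $\mathbf{H}:\mathcal{H}^1_w\to(\mathcal{H}^1_w)^\urcorner$ is a detail the paper leaves implicit (it also follows at once from $L^2$-boundedness and the embeddings $\mathcal{H}^1_w\hookrightarrow\mathcal{H}\hookrightarrow(\mathcal{H}^1_w)^\urcorner$), but it does not change the argument.
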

\begin{proof}
We choose a basis function
$g\in \mathcal{S}(\mathbb{R})$ such  that $\mathrm{supp}\, \hat{g}
\subseteq \{ \omega \in \mathbb{R}: 1/2 \leq |\omega | \leq 2\}$ and 
$\{ \pi (2^{-j}k,2^{-j}g : 
j,k\in \mathbb{Z}\}$ is a Banach frame  for
$\dot{B}_{pq}^{\sigma}(\mathbb{R}^d)$.
Since the Hilbert transform commutes with all translations $T_x$ and
 dilations $D_s$, i.e., 
$\mathbf{H}(T_xD_s)f(t)=T_xD_s (\mathbf{H}f)(t)$ $\mathbf{H}$ maps the frame
$\pi(2^{-j}k,2^{-j})  g$  into atoms $\pi (2^{-j}k,2^{-j}) \mathbf{H}g$. 
 Therefore it suffices  to prove that 
 $W_g\mathbf{H}g \in W^R(L^\infty, L^1_w)(\mathcal{G}_A)$ where $w(x,s) =
 s^{-\sigma +d/2-d/q}$, then $K=|W_g\mathbf{H}g|$
 serves as an envelope for which \eqref{formula_definition_molecules}
 holds and $\mathbf{H}(\pi(2^{-j}k,2^{-j}))  g, j,k \in \mathbb{Z}$ is a set of
 molecules.  
 Since both $g$ and $\mathbf{H}g$ are in $\mathcal{S}$ with all vanishing
 moments, estimate~\eqref{wavelet_estimate} and the proof of
 Proposition \ref{proposition_Besov} show that    $W_g\mathbf{H}g
 \in W^R(L^\infty, L^1_w)(\mathcal{G}_A)$. 
\end{proof}

\bibliographystyle{abbrv}

\end{document}